\numberwithin{equation}{section}
\newcommand{\BlackBoxes}{\global\overfullrule5pt}
\newcommand{\weakD}{\stackrel{\lower0.2ex\hbox{$\scriptscriptstyle   
             \mathbf{D}[0,1] $}}{\Rightarrow}}
\newcommand{\weak}{\stackrel{\lower0.2ex\hbox{$\scriptscriptstyle
                    \it{w} $}}{\rightarrow}}
\newcommand{\dist}{\stackrel{\lower0.2ex\hbox{$\scriptscriptstyle
                    \mathcal{D} $}}{\rightarrow}}                    
\newcommand{\tauconv}{\stackrel{\lower0.2ex\hbox{$\scriptscriptstyle
                    \it{\tau} $}}{\rightarrow}}
\newcommand{\calX}{\mathcal{X}}
\def\R{\mathbb{R}}
\def\Prob{\mathbb{P}}
\def\E{\mathbb{E}}
\def\calE{\mathcal{E}}
\def\frakF{\mathfrak{F}}
\def\linf{\ell _{\infty}(\mathfrak{F})}
\def\lin{\ell _{\infty}}
\newcommand{\M}{\mathcal{M}}
\newcommand{\re}{\mathcal{H}}
\newcommand{\norm}[1]{\left|\left|#1\right|\right|}
\newcommand{\abs}[1]{\left|#1\right|}
\theoremstyle{theorem}
\newtheorem{theorem}{Theorem}[section]
\newtheorem{lemma}[theorem]{Lemma}
\newtheorem{corollary}[theorem]{Corollary}
\theoremstyle{definition}
\newtheorem{remark}[theorem]{Remark}
\begin{document}

\title[Empirical processes in importance sampling]{Moderate deviations for importance sampling estimators of risk measures}        



\author[P.~Nyquist]{Pierre Nyquist}
\address[P.~Nyquist]{Department of Mathematics, KTH, 100 44
Stockholm, Sweden}\email{{pierren@kth.se}}  
\date{\today}


\subjclass[2010]{Primary 60F10, 65C05; secondary 62F12, 62P05}

\keywords{Large deviations, moderate deviations, empirical process, importance sampling, Monte Carlo, risk measures}

\begin{abstract} 
Importance sampling has become an important tool for the computation of tail-based risk measures. Since such quantities are often determined mainly by rare events standard Monte Carlo can be inefficient and importance sampling provides a way to speed up computations. This paper considers moderate deviations for the weighted empirical process, the process analogue of the weighted empirical measure, arising in importance sampling. The moderate deviation principle is established as an extension of existing results. Using a delta method for large deviations established by Gao and Zhao (\emph{Ann. Statist.}, 2011) together with classical large deviation techniques, the moderate deviation principle for the weighted empirical process is extended to functionals of the weighted empirical process which correspond to risk measures. 
The main results are moderate deviation principles for importance sampling estimators of the quantile function of a distribution and Expected Shortfall. 
\end{abstract}

%


\maketitle   
\section{Introduction}
Importance sampling has become a popular tool for making Monte Carlo simulation more efficient. In particular when used to estimate quantities largely determined by rare events. An importance sampling algorithm is defined in terms of a change of measure from the original dynamics of the system under consideration. The idea is that the important events for the quantity one is trying to estimate will occur more frequently under the new dynamics. The error introduced by using a different dynamics when sampling is corrected for by associating with each sample a weight corresponding to likelihood ratio associated with the change of measure. Since many such changes of measure are possible the question of which one is more efficient becomes imperative for choosing simulation algorithm. 


In the financial and actuarial context, risk measures such as Value-at-Risk and Expected Shortfall are commonly used to quantify risk. These, and other risk measures, depend on the tail of the loss distribution and for all but very simple models exact formulas are not available. Therefore, stochastic simulation, in particular Monte Carlo methods, is emerging as an indispensable tool for computing such quantities. Many risk measures can be formulated as functionals of the loss distribution. When these functionals are mainly determined by rare events - in this setting, events far out in the tail of the distribution - the computational cost of standard Monte Carlo can be too high for practical use. In order to reduce the computational cost, while maintaining a desired accuracy, importance sampling is a viable alternative. For estimating probabilities or, more generally, expectations, the efficiency of an algorithm is expressed in terms of the variance of the resulting estimator. A great amount of work has gone into studying such problems. One successful approach involves relating the estimation problem to a stochastic game and studying subsolutions of the accompanying Isaacs equation, see for example \cite{DupuisWang, DupuisWangSubsol} for some of the early work in this area. Another avenue for analyzing efficiency is provided by so-called Lyapunov inequalities \cite{BlanchetLiu, BlanchetGlynn, BlanchetGlynnLeder}. However, the amount of work that has gone into studying computation of quantiles and other risk measures is far less. Notable exceptions are for example \cite{Glasserman2002} and \cite{Glynn96} in which importance sampling estimation of a quantile is studied. 
Since risk measures are (often) non-linear functionals of a distribution, estimators are typically biased. Therefore, variance is no longer the canonical measure on which to base efficiency analysis of simulation algorithms. 

In \cite{Hult2011} efficiency of importance sampling algorithms is studied from the perspective of empirical processes. The authors establish central limit theorems for the empirical processes that correspond to importance sampling estimators of certain risk measures. The risk measures under consideration are Value-at-Risk and Expected Shortfall. As a measure of efficiency the authors study, in the rare event limit, the variance of the limiting random element in the central limit theorem. 
The main tools of \cite{Hult2011} are a central limit theorem for the empirical process corresponding to the underlying weighted empirical measure and the delta method (see for example \cite{Wellner2000}).

This paper complements the central limit theorems in \cite{Hult2011} by studying the same weighted empirical process from the large deviation perspective. Such results are commonly known as moderate deviations and they describe the asymptotic of probabilities on an intermediate scale between the central limit theorem and the large deviation principle. Moderate deviation results add to the central limit theorem in that they describe the rate of convergence and provide insight into how asymptotic confidence intervals can be constructed. Therefore, the study of moderate deviation properties has become a standard problem when considering statistical estimators.

The main results of this paper are moderate deviation principles for the importance sampling estimator of a quantile function and Expected Shortfall, respectively. As a first result, based on \cite{Wu94} and \cite{Ledoux92} we obtain the large deviation principle (or, with a different name, moderate deviation principle) for the empirical process that corresponds to the weighted empirical measure arising in importance sampling. This is a rather straightforward extension of the results in \cite{Wu94} and \cite{Ledoux92} to the setting of weighted empirical measures. Using this extension, the moderate deviation principle is shown to hold for importance sampling estimators of a quantile function and Expected Shortfall. The main tool, aside from the moderate deviation principle for the weighted empirical process, is a delta method for large deviations established in \cite{GaoZhao2011}. However, for the Expected Shortfall the delta method is not enough and we need to consider the asymptotics of the estimators in more detail and make use of exponential approximations to obtain the moderate deviation principle.
The paper is aimed at establishing the relevant moderate deviation results for general importance sampling algorithms and apply them in the context of quantiles and Expected Shortfall. Concrete examples on efficiency analysis of specific algorithms will be reported on elsewhere.

Moderate deviations of empirical processes is a rather well-studied subject. Some general references are \cite{ArconesEmp, BorovkovMog2, deAcosta97, deAcostaChen98, Ledoux92, Wu94}. See also \cite{GaoZhao2011} and the references therein. For estimation using standard Monte Carlo there are a number of moderate deviations results. For example, in addition to establishing the delta method for large deviations, \cite{GaoZhao2011} studies moderate deviations for several common estimators. In \cite{GaoWang2011} the authors study the asymptotic behavior of Expected Shortfall and one of their results is the moderate deviation principle. For stochastic simulation methods other than standard Monte Carlo, the literature on moderate deviations seems to be more scarce. For quite some time, mean field interacting particle models have been studied extensively in connection with stochastic simulation. In this context, a pioneering work is \cite{DoucEtAl2005} which studies moderate deviations for particle filtering. Recently, \cite{DelMoralEtAl2012} investigated moderate deviations for a large class of interacting particle models of mean field type. 
Large deviation results for the weighted empirical measures arising in importance sampling, with applications to efficiency analysis of importance sampling algorithms, are obtained in \cite{HultNyq}. This paper can be seen as an empirical process analogue of that work.

The rest of the paper is organized as follows. Section \ref{sec:preliminaries} introduces the notation used in the paper and the necessary background on empirical processes and large deviations is presented. In Section \ref{sec:empIS} the connection between importance sampling and empirical processes is discussed and the moderate deviation principle is shown for the weighted empirical process that arises in importance sampling. This result is used in Section \ref{sec:MDPrisk} to obtain the moderate deviation principle for importance sampling estimators of the quantile function and Expected Shortfall. The proofs of some auxiliary results, used for the results in Section \ref{sec:MDPrisk}, are given in Section \ref{sec:aux}.

\section{Preliminaries}
\label{sec:preliminaries}
\subsection{Notation}
Throughout the paper $(E, \calE)$ denotes a measurable space. $\M_1 = \M_1 (E)$ and $\M_b = \M _b(E)$ denotes the space of probability measures on $E$ and the space of  signed measures of finite variation on $E$, respectively. For $\nu \in \M_b$ denote by $\M_b ^{\nu,0}$ the subset of measures $\eta \in \M_b$ such that $\eta \ll \nu$ and $\eta (E) = 0$. For any measure $\eta$ on $(E, \calE)$ and $p \geq 1$, $L_p(E,\eta)$ is the space of measurable real-valued functions $f$ such that $(\int \abs{f}^p\, d\eta )^{1/p} < \infty$. For a function $f \colon E \mapsto \R$, $\text{supp}(f)$ denotes the support of $f$ and similarly for measures. When $\frakF$ is a  collection of functions, $\text{supp}(\frakF)$ is the smallest measurable set such that $\text{supp}(f) \subset \text{supp}(\frakF)$ for every $f \in \frakF$. In particular, $\text{supp}(\frakF) = \cup _{f\in \frakF} \text{supp}(f)$ if the union is measurable. For a measurable set $A$, let $A^{o}$ and $\bar{A}$ denote interior and closure of $A$, respectively.
Let $\{ \lambda _n \}$ be an increasing sequence such that $\lambda_ n \rightarrow \infty$ and $\lambda_n = o (\sqrt{n})$ as $n \rightarrow \infty$. For two real-valued functions $f$ and $g$, $f = o(g)$ and $f \sim g$ means that $f(x)/g(x)$ tends to $0$ and $1$ respectively as $x \rightarrow \infty$; similarly for sequences. Let $ b_n =  \sqrt{n}/\lambda_n$. By the properties of the sequence $\{ \lambda_n \}$, $b_n \rightarrow \infty$ as $n$ grows. Throughout, the terms large deviation and moderate deviation are used interchangeably with the interpretation that the moderate deviation principle is the large deviation principle with certain speed and scaling. Technical results, although repeatedly referred to in terms of the moderate deviation principle, are stated as large deviation principles.

\subsection{Empirical processes}
Let $X_1,X_2,...$, be independent and identically distributed random variables taking values in $E$ according to the law $\mu \in \M _1$. For $n \geq 1$, the empirical measure corresponding to the $n$ first random variables is
$$ \mu_n = \frac{1}{n}\sum _{i=1} ^n \delta _{X_i},$$
where $\delta _x$ denotes a unit point mass at $x$. 
Monte Carlo estimation of quantities related to $\mu$ is based on this sequence of empirical measures. For example, the mean of a function $f$ under $\mu$ is estimated by $\mu _n(f)$. By the law of large numbers, as $n\rightarrow \infty$, the empirical measure $\mu _n$ converges in the weak topology to $\mu$ with probability 1.

Let $\frakF$ be a class of measurable functions $f$ such that $f \in L_1(E,\mu)$ and, for each $x \in E$, $\sup \{ \abs{f(x)} \colon f \in \frakF \}  < \infty$. Furthermore, let $\linf$ be the space of all bounded functions $F \colon \frakF \mapsto \R$. This space is henceforth equipped with the sup-norm, $ \norm{F}_{\frakF} = \sup \{  \abs{F(f)} \colon f \in \frakF \}$ for $F \in \linf$. There will be a slight abuse of notation in that, for any $x\in E$, $\norm{f(x)} _{\frakF} = \sup \{\abs{f(x)} \colon f \in \frakF \} $. 
For each finite measure $\eta$ on $(E,\calE)$ there is a corresponding element $\eta ^{\frakF} \in \linf$ defined by
$$ \eta ^{\frakF} (f) = \eta (f) = \int _E f d\eta, \ f \in \frakF.$$ 
In particular, there is an element $\mu_n ^{\frakF} \in \linf$ corresponding to the empirical measure. To ease notation the superscript is dropped whenever the context is clear. If for all $x \in E$, $\sup _{f \in \frakF} | f(x) - \mu(f) | < \infty$, then the empirical process $\xi _n$ given by
$$ \xi _n (f) = \sqrt{n} \big ( \mu _n (f) - \mu(f) \big ), \ f \in \frakF,$$
can be viewed as a map into $\linf$. 

In order to keep the discussion of measurability issues to a minimum, large deviation results for empirical processes are in general stated in terms of outer and inner probabilities (defined below). In Section \ref{sec:MDPrisk}, when considering estimation of the tail of a distribution we equip $\linf$ with the $\sigma$-algebra generated by all balls and coordinate projections. This is consistent with the approach taken in \cite{GaoZhao2011} and ensures the necessary measurability properties (see \cite{Wellner2000}, Section 1.7). Aside from this, no explicit assumptions are made regarding measurability of the random variables or the $\sigma$-algebras involved and the remaining results are to be interpreted in terms of outer and inner probabilities. In Theorems \ref{thm:Wu_thm2}, \ref{thm:1} and \ref{thm:2}, for a general class $\frakF$, separability of the processes is assumed. This is only to ensure sufficient measurability and for the application to risk measures the mentioned use of a specific $\sigma$-algebra takes care of this. See \cite{Dudley99, LeTa2010, Wellner2000} and the references therein for more details on measurability issues in connection with empirical processes and Banach space valued random variables in general. 
\subsection{Large deviations}
This paper is concerned with the large deviation principle for certain empirical processes. In order to introduce this concept, the notions of \emph{outer} and \emph{inner integral} and \emph{outer} and \emph{inner probability}, as defined in \cite{Wellner2000}, Section 1.2, must first be introduced. Let $(\Omega, \mathcal{F}, \Prob)$ be an arbitrary probability space, $\E$ the expectation operator associated with $\Prob$ and $T \colon \Omega \mapsto [-\infty, \infty] $ an arbitrary map. The \emph{outer integral} of $T$ with respect to $P$ is
$$Ê\E ^{\ast} [T] = \inf \{ \E [U] \colon U \geq T, \ U \colon \Omega \mapsto [-\infty, \infty] \ \textrm{is measurable and } \E[U] \ \textrm{exists}  \}.$$
The \emph{outer probability} of an arbitrary subset $B$ of $\Omega$ is defined as
$$Ê\Prob ^{\ast} (B) = \inf \{ P(A) \colon A \supset B , \ A \in \mathcal{F} \}.$$
The definitions of \emph{inner integral}, $\E _{\ast}[T]$, and \emph{inner probability}, $\Prob _{\ast} (B)$, are analogous to these with the obvious changes; see \cite{Wellner2000}, Section 1.2 and beyond. 

For some metric space $\mathcal{X}$, consider a sequence $(\Omega _n , \mathcal{F} _n, \Prob _n)$ of probability spaces and maps $X_n \colon \Omega _n \mapsto \mathcal{X}$. Suppose that $I \colon \mathcal{X} \mapsto [0, \infty]$ is lower semicontinuous with compact level sets. Then, $\{ X_n \}$ is said to satisfy the large deviation principle (LDP) in $\mathcal{X}$, with speed $c_n ^{-1}$, where $\{ c_n \} \subset \mathbb{R} _ +$ and $c_n \rightarrow \infty$, and rate function $I$ if, for any measurable set $A \subset \mathcal{X}$,
\begin{align*}
	- \inf _{x \in A^o} I(x) &\leq \liminf _{n \rightarrow \infty }\frac{1}{c_n} \log \Prob _{n \ast} (X_n \in A) \\
	&\leq \limsup _{n \rightarrow \infty} \frac{1}{c_n} \log \Prob _{n} ^{ \ast} (X_n \in A) \leq - \inf _{x \in \bar{A}} I(x).
\end{align*}
In the absence of any measurability issues, $\Prob _{n \ast}$ and $\Prob _{n} ^{\ast}$ are replaced by $\Prob _n$.

Sanov's theorem (\cite{Dembo98}, Theorem 6.2.10) states that the sequence $\{ \mu_n \}$ of empirical measures satisfies the LDP in $\M _1$, equipped with the $\tau$-topology, with speed $n^{-1}$ and rate function given by the relative entropy $\re (\cdot \mid \mu)$. As mentioned in \cite{Wu94} it holds that the sequence
$$ \Big \{ \frac{1}{\lambda (n)} \xi _n \Big \}= \Big\{ b_n\big( \mu_n - \mu \big) \Big\},$$
satisfies the LDP in $\M _b$, equipped with the $\tau$-topology, with speed $\lambda _n ^{-2}$ and rate function
\begin{equation*}
\label{eq:Rate_MD}
 I _{\mu}(\eta) = \begin{cases} \frac{1}{2}\int \Big ( \frac{d\eta}{d\mu}\Big)^2 d\mu, & \text{if $\eta \in \M _b ^{\mu, 0}$}, \\
	+ \infty, & \text{otherwise}.
\end{cases}
\end{equation*}
The first key result for what will follow is established in \cite{Wu94} and concerns the LDP for empirical processes based on a sequence of independent and identically distributed random variables. In the context of empirical processes the LDP is also referred to as the moderate deviation principle (MDP). Let $d_2 \colon \frakF \times \frakF \mapsto \R$ denote the pseudometric on $\frakF$ given by
\begin{equation*}
	d_2(f,g) = \Big ( \int (f-g) ^2 d\mu \Big ) ^{\frac{1}{2}}, \ f,g\in \frakF.
\end{equation*}
It is to be understood that if the reference measure $\mu$ is changed then the definitions of $I_{\mu}$ and $d_2$ are changed accordingly.
\begin{theorem}[cf.\ \cite{Wu94}, Theorem 5]
\label{thm:Wu_thm2}
Suppose that $\frakF$ is a class of functions in $L_2(E, \mu)$ and there exist constants $A \geq 1$ and $\delta \in (0,1)$ such that
for all integers $n,k \geq 1$,
\begin{equation}
\label{eq:cond}
Ê\lambda _{nk} \leq A k ^{-(\delta - 1/2)} \lambda _n.
\end{equation}
Then, $\{ (b_n (\mu _n - \mu)\} $ satisfies the LDP in $\linf$ with speed $\lambda _n ^{-2}$ and rate function
\begin{equation}
\label{eq:RateMb}
	I_{\frakF} (G) = \inf \Big \{ I_{\mu} (\eta) \colon \eta \in \M _b \ \text{and } \eta ^{\frakF} = G \ \text{on } \frakF \Big \}, \ G \in \linf,
\end{equation}
if and only if the following three conditions are fulfilled: \\

(i) $(\frakF , d_2)$ is totally bounded,

(ii) $b_n (\mu _n - \mu) \rightarrow 0$ in probability in $\linf$,

(iii) there exists $M > 0$ such that, for all $u >0$,
\begin{equation}
\label{eq:condTail}
Ê\limsup _{n \rightarrow \infty} \frac{1}{\lambda_n ^2} \log \bigl ( n \mu  (\norm{f(x)}_{\frakF} > u \lambda _n \sqrt{n}) \bigr ) \leq - \frac{u^2}{M}.
\end{equation}
\end{theorem}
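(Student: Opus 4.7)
The plan is to follow the classical two-step route of Wu \cite{Wu94}: first establish the MDP for all finite-dimensional marginals of $b_n(\mu_n-\mu)$, then upgrade to $\linf$ via exponential tightness, with the three conditions (i)--(iii) playing the roles of entropy control, centering, and tail control respectively.

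For sufficiency, fix any finite subset $\{f_1,\ldots,f_k\}\subset\frakF$. The map $\eta\mapsto(\eta(f_1),\ldots,\eta(f_k))$ from $(\M_b,\tau)$ into $\R^k$ is continuous and linear, so the contraction principle applied to the MDP for $b_n(\mu_n-\mu)$ on $(\M_b,\tau)$ (recorded in the preliminaries) yields a finite-dimensional MDP with rate function obtained by infimum of $I_\mu$ over densities matching the prescribed linear functionals; this matches $I_\frakF$ restricted to the corresponding cylinder. The substantive work is exponential tightness at speed $\lambda_n^{-2}$. Using (i), cover $\frakF$ by finitely many $d_2$-balls of small radius; within a ball, truncate each $f$ at level $\lambda_n\sqrt n$, using (iii) to absorb the probability that any $X_i$ contributes an untruncated term at the target exponential rate, and apply a Bernstein-type inequality to the bounded centered sum of the truncated summands. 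The variance of the increment $f-g$ is governed by $d_2(f,g)^2$, which can be made arbitrarily small, so a chaining argument along progressively finer $d_2$-nets of $\frakF$ delivers the required exponential control on
\[
\sup_{d_2(f,g)\le\varepsilon}\abs{b_n(\mu_n-\mu)(f-g)}.
\]
Condition (ii) ensures the process stays near zero in probability so that no mass escapes to infinity, closing the tightness argument and allowing the finite-dimensional MDPs to be promoted to the full LDP in $\linf$ by a standard projective limit identification.

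For necessity, each condition can be read off from the LDP together with the form of $I_\frakF$. Compactness of the level sets $\{I_\frakF\le C\}$ in $\linf$, combined with the uniform $L_2(\mu)$-bound on the densities $d\eta/d\mu$ of admissible measures, forces the evaluation maps $f\mapsto\eta(f)$ to be equi-$d_2$-Lipschitz on any such set; compactness of the image then forces $(\frakF,d_2)$ to be totally bounded, yielding (i). Condition (ii) follows from the LDP upper bound together with the fact that $0$ is the unique zero of $I_\frakF$. Condition (iii) is extracted from the LDP upper bound applied to the event that some $X_i$ produces a summand with $\norm{f(X_i)}_\frakF>u\lambda_n\sqrt n$, an event whose probability a union bound reduces to $n\mu(\norm{f}_\frakF>u\lambda_n\sqrt n)$, and whose exponential rate on the LDP side is exactly $u^2/M$.

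The main obstacle will be the exponential equicontinuity step at moderate deviation speed. Although the truncation-plus-Bernstein pattern is classical, the bookkeeping between the variance scale from (i), the truncation level from (iii), and the chaining cost driven by (i) must leave slack strictly below $\lambda_n^{-2}$; this is where the specific regime $\lambda_n\to\infty$ with $\lambda_n=o(\sqrt n)$ enters in an essential way, since it is precisely this window that makes the Bernstein bound behave in a Gaussian fashion at the right speed. Measurability of the suprema over $\frakF$ is handled by the separability assumption announced in the theorem statement, so no outer-probability complications arise inside the chaining.
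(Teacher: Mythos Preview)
The paper does not give its own proof of this theorem: it is quoted (with a slight reformulation of the hypotheses) from \cite{Wu94}, Theorem~5, and the accompanying remark points to \cite{Ledoux92} as the source of the main argument. So there is no in-paper proof to compare against; the relevant benchmark is the Wu/Ledoux approach the paper cites.

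Your outline is broadly that approach: finite-dimensional MDPs by contraction from the $\tau$-topology MDP on $\M_b$, then an exponential equicontinuity/tightness step built from truncation, a Bernstein-type inequality, and chaining over $(\frakF,d_2)$, with (i)--(iii) supplying the entropy, centering, and tail control. That is the correct skeleton.

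There is, however, a genuine omission. You never use hypothesis~\eqref{eq:cond}, the regularity condition $\lambda_{nk}\le A k^{-(\delta-1/2)}\lambda_n$, and your sketch attributes the whole difficulty to the regime $\lambda_n\to\infty$, $\lambda_n=o(\sqrt n)$. In the Ledoux argument this extra condition is not cosmetic: the exponential tightness is obtained via a blocking/isoperimetric step in which the sum of $n$ terms is grouped into $k$ blocks of length $\lfloor n/k\rfloor$, and one needs to transfer moderate-deviation control at scale $\lambda_{\lfloor n/k\rfloor}$ back to scale $\lambda_n$. Condition~\eqref{eq:cond} is exactly what makes this transfer go through; without it the chaining-plus-Bernstein bookkeeping at speed $\lambda_n^{-2}$ does not close (this is also why the paper's remark stresses that \eqref{eq:cond} and \eqref{eq:condTail} are the correct hypotheses, in contrast to the slightly different ones printed in \cite{Wu94}). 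Your sufficiency argument should identify where this blocking step enters and invoke \eqref{eq:cond} there. The necessity sketch for (iii) is also too loose: the LDP upper bound does not directly speak about the event $\{\exists i:\ \norm{f(X_i)}_\frakF>u\lambda_n\sqrt n\}$, and the actual argument in \cite{Ledoux92} extracts (iii) by a more careful comparison of one-term tails with the moderate-deviation rate.
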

\begin{remark}
The difference between the above statement and that of Theorem 5 in \cite{Wu94} is the formulation of conditions \eqref{eq:cond} and \eqref{eq:condTail}. Following closely \cite{Ledoux92}, which provides the main argument for the proof, the necessary and sufficient conditions for establishing the LDP are indeed \eqref{eq:cond} and \eqref{eq:condTail} and not those provided in \cite{Wu94} (conditions (3.5) and $(iii)$ of Theorem 5).
\end{remark}
The first new result of this paper, Theorem \ref{thm:1}, is a rather straightforward extension of Theorem \ref{thm:Wu_thm2} and arguments in \cite{Ledoux92} to the setting of weighted empirical measures. In Theorem 2 in \cite{Wu94} it is shown  that when the class $\frakF$ is uniformly bounded between $0$ and $1$ the restrictions on $\lambda _n$ and the condition $(iii)$ are not necessary (the latter is of course then trivially fulfilled). This enables us to drop the conditions on $\lambda _n$ for certain importance sampling algorithms, as described in Theorem \ref{thm:2}.
Note that the rate function \eqref{eq:RateMb} is precisely the rate function one would guess, in the sense that whenever the contraction principle is applicable (for example when $\frakF$ is finite), \eqref{eq:RateMb} is the obtained rate function.

The second result that we utilize is a delta method for large deviations established in \cite{GaoZhao2011}. Only the first part of the result is presented and the reader is referred to the original paper for the entire statement. Here, $(\Omega _n, \mathcal{F} _n , \Prob _n)$ is a sequence of probability spaces. 
\begin{theorem}[Theorem 3.1, \cite{GaoZhao2011}]
\label{thm:GaoZhao}
	Let $\calX$ and $\mathcal{Y}$ be two metrizable topological spaces. Let $\Phi \colon \mathcal{D}_{\Phi} \subset \calX \mapsto \mathcal{Y}$ be Hadamard-differentiable at $\theta$ tangentially to $\mathcal{D}_0 \subset \calX$. Let $X_n \colon \Omega _n \mapsto \mathcal{D}_{\Phi}, n\geq 1$, be a sequence of maps and let $\{ r_n \}$ be a sequence of positive real numbers satisfying $r_n \rightarrow \infty$. If $\{ r_n (X_n - \theta) \} $ satisfies the LDP with speed $c_n ^{-1}$ and rate function $I$, such that $\{ I < \infty \} \subset \mathcal{D}_0$, then $\{ r_n (\Phi(X_n) - \Phi(\theta)) \} $ satisfies the LDP with speed $ c_n ^{-1}$ and rate function
$$ I _{\Phi_{\theta} '}  (y) = \inf \{ I(x) \colon \Phi _{\theta} ' (x) = y \}.$$
\end{theorem}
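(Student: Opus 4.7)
The plan is to establish the delta method for large deviations by combining Hadamard differentiability with the contraction principle and an exponential-approximation argument. Write $Y_n = r_n (X_n - \theta)$, which by hypothesis satisfies the LDP in $\calX$ with speed $c_n ^{-1}$ and rate function $I$, whose effective domain $\{ I < \infty\}$ is contained in $\mathcal{D}_0$. The target quantity is $Z_n = r_n (\Phi(X_n) - \Phi(\theta))$, and Hadamard differentiability strongly suggests $Z_n$ is close to $\Phi _{\theta}'(Y_n)$.

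The first step is to apply the contraction principle to $\Phi _{\theta}' \colon \mathcal{D}_0 \mapsto \mathcal{Y}$. Tangential Hadamard differentiability forces $\Phi _{\theta}'$ to be continuous on $\mathcal{D}_0$, and since the rate function vanishes off $\mathcal{D}_0$ the contraction principle yields the LDP for $\Phi _{\theta}'(Y_n)$ with speed $c_n ^{-1}$ and rate function
$$ I _{\Phi _{\theta}'}(y) = \inf \{ I(x) \colon \Phi _{\theta}'(x) = y \},$$
which matches the target. It then remains to show that $Z_n$ and $\Phi _{\theta}'(Y_n)$ are exponentially equivalent at speed $c_n ^{-1}$: for every $\varepsilon > 0$ and any metric $d _{\mathcal{Y}}$ compatible with the topology of $\mathcal{Y}$,
$$ \limsup _{n \rightarrow \infty} \frac{1}{c_n} \log \Prob _n ^{\ast} \bigl ( d _{\mathcal{Y}}(Z_n, \Phi _{\theta}'(Y_n)) > \varepsilon \bigr ) = - \infty.$$
Once this is verified, the standard exponential-equivalence transfer lemma carries the LDP from $\Phi _{\theta}'(Y_n)$ over to $Z_n$ with the same rate function.

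For the exponential equivalence the key input is the exponential tightness implied by the LDP with a good rate function. Fix $M > 0$ and consider the compact sublevel set $K_M = \{ I \leq M \} \subset \mathcal{D}_0$. A subsequence-extraction argument upgrades tangential Hadamard differentiability to a \emph{uniform} statement on any compact $K \subset \mathcal{D}_0$: for any $\varepsilon > 0$ there is $n_0$ such that for all $n \geq n_0$,
$$ \sup _{h \in K} d _{\mathcal{Y}} \bigl ( r_n \bigl( \Phi(\theta + h/r_n) - \Phi(\theta)\bigr), \Phi _{\theta}'(h) \bigr ) < \varepsilon.$$
Choosing $K$ to be a compact neighborhood of $K_M$, the event $\{ d _{\mathcal{Y}}(Z_n, \Phi _{\theta}'(Y_n)) > \varepsilon \}$ is contained in $\{ Y_n \notin K \}$ for large $n$, and the outer probability of the latter is $\exp(-c_n (M - o(1)))$ by the LDP upper bound. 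Letting $M \to \infty$ closes the argument.

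The main obstacle is precisely this upgrade from pointwise tangential differentiability to uniform differentiability on compacta. If it failed one could extract $h_n \in K$ along which the discrepancy exceeds $\varepsilon$; passing to a convergent subsequence $h_{n_k} \to h \in K \subset \mathcal{D}_0$ and invoking the definition of Hadamard differentiability with $t_{n_k} = 1/r_{n_k}$ and tangent direction $h$ produces a contradiction. The hypothesis $\{ I < \infty \} \subset \mathcal{D}_0$ is used precisely here, to guarantee that the limit points produced by compactness lie in the domain where $\Phi _{\theta}'$ is defined and tangential differentiability applies. With this uniform approximation in hand, contraction plus exponential equivalence deliver the delta method at the LDP scale.
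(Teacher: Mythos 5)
This theorem is quoted from \cite{GaoZhao2011}; the paper gives no proof of it, so there is no internal argument to compare against and I can only assess your proposal on its own terms. Your architecture --- contraction through $\Phi_\theta'$ plus an exponential approximation justified by a subsequence-extraction upgrade of tangential Hadamard differentiability to uniform differentiability on compact sublevel sets of $I$ --- is the standard one, and you have correctly located where the hypothesis $\{I<\infty\}\subset\mathcal{D}_0$ enters. But the implementation of the exponential-equivalence step has a genuine gap. First, $\Phi_\theta'(Y_n)$ need not be defined: $Y_n=r_n(X_n-\theta)$ takes values in $r_n(\mathcal{D}_\Phi-\theta)$, not in $\mathcal{D}_0$, and in the applications in this paper $\mathcal{D}_0$ is the set of continuous functions, a nowhere dense subset of $\ell_\infty[a,b]$. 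For the same reason ``a compact neighborhood $K$ of $K_M$'' contained in $\mathcal{D}_0$ does not exist; no compact subset of an infinite-dimensional normed space is a neighborhood of anything.

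Second, and more structurally, your compact set $K$ is asked to do two incompatible jobs. For the subsequence argument the limit points must lie in $\mathcal{D}_0$, which forces $K$ to be (contained in) a sublevel set $K_M=\{I\le M\}$. But then $\Prob_n^{\ast}(Y_n\notin K)$ is not bounded by $\exp(-c_n(M+o(1)))$: the LDP upper bound applies to closed sets, $K^c$ is open, and $\inf_{\bar{K^c}}I$ can equal $0$ when $K$ has empty interior, as it always does here. If instead $K$ comes from exponential tightness (which, incidentally, is not automatic from goodness of $I$ outside Polish or locally compact spaces, and $\linf$ is not separable), then $K\not\subset\mathcal{D}_0$ and the subsequence argument breaks. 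The resolution, as in \cite{GaoZhao2011} and in the classical functional delta method, is a two-parameter uniform statement: for compact $K\subset\mathcal{D}_0$ and $\varepsilon>0$ there exist $\delta>0$ and $n_0$ such that for all $n\ge n_0$, all $h\in K$ and all $h'$ with $d(h',h)<\delta$ and $\theta+h'/r_n\in\mathcal{D}_\Phi$, one has $d_{\mathcal{Y}}\bigl(r_n(\Phi(\theta+h'/r_n)-\Phi(\theta)),\Phi_\theta'(h)\bigr)<\varepsilon$; the perturbations $h'$ are allowed to leave $\mathcal{D}_0$, which is precisely what ``tangentially'' provides, and the same contradiction argument proves it. One then works with the open enlargement $K_M^\delta$, whose complement is closed with $\inf I\ge M$, and assembles the upper and lower LDP bounds for $Z_n$ directly, never forming $\Phi_\theta'(Y_n)$. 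With that modification your proof goes through.
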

\section{Moderate deviations for weighted empirical processes}
\label{sec:empIS}
In importance sampling one considers sampling from a distribution $\nu$ such that sampling from $\nu$ is, hopefully, beneficial for the estimation task at hand. In order for a distribution $\nu$ to be feasible it must hold that $\mu \ll \nu$. The weighted empirical measure corresponding to importance sampling is 
$$ \nu_{n} ^w = \frac{1}{n} \sum _{i=1} ^n w(X_i) \delta _{X_i},$$
where the $X_i$'s are independent random variables with common distribution $\nu$ and $w(\cdot)$ is the \emph{weight function}, given by the Radon-Nikodym derivative $d\mu / d\nu$. The (standard) empirical measure based on $X_1,...,X_n$ is denoted by $\nu _n$. The idea behind importance sampling is that if $\nu _n ^w$ is a good approximation of $\mu$, using $\nu _n ^w$ should give good estimation of the quantity of interest. However, depending on the quantity one is trying to estimate it may suffice to have a good approximation on a subset of $E$. In contrast to standard Monte Carlo, one cannot hope for the weighted empirical measure $\nu _n ^w$ to be, in some sense, close to $\mu$ over the entire space $E$. In \cite{HultNyq} this is taken into account by introducing a so-called \emph{importance function}, here denoted by $f_i$. The function $f_i$ is non-negative, measurable and $\mu (f_i) < \infty$ and the importance of different regions of $E$ is indicated by $f_i$. Then, it suffices to have $\mu \ll \nu$ on $\textrm{supp}(f_i)$ and it is possible to define the weight function $w = (d\mu / d\nu) I\{f > 0\}$ which is now well-defined over the entire space $E$. The interpretation of a good approximation is that $\nu _n ^{wf}$ is close to the weighted measure $\mu ^f$. For the remainder of the paper we do not make any explicit comments related to the importance function and it is considered included in the weight function $w$; for more discussion see \cite{HultNyq}.

Similar to how the empirical process $\xi _n$ is associated with the standard empirical measure there is an empirical process $\xi _{n} ^w$ associated with the weighted empirical measure $\nu _n ^w$,
\begin{equation*}
 \xi _{n} ^w (f) = \sqrt{n} \big ( \nu _n ^w (f) - \mu(f) \big ), \ f\in \frakF. 
 \end{equation*}
The first result yields the LDP for $\{ \xi _{n} ^w \}$ . 
\begin{theorem}
\label{thm:1}
	Let $\frakF$ be a class of real valued functions such that $0\leq f \leq 1$ for every $f \in \frakF$. 
		Suppose that $\E _{\nu} [\exp\{\alpha w(X) \}] < \infty$ for every $\alpha > 0$ and $\lambda _n$ satisfies \eqref{eq:cond}. If $\frakF$ is $\nu$-Donsker, then the empirical process sequence $ \{ b_n (\nu_{n} ^w - \mu\} $ corresponding to importance sampling satisfies the LDP in $\linf$ with speed $\lambda _n^{-2}$ and rate function 
$I ^w _{\frakF} \colon \linf \mapsto [0,\infty]$ given by
\begin{equation}
\begin{split}
\label{eq:rateXi}
	I ^w _{\frakF} (G) = \inf \Big \{ I_{\nu}(\eta) \colon \eta \in \M _b ^{\nu, 0}, \eta(wf) = G(f) \ \forall f \in \frakF \Big \}.
\end{split}
\end{equation}
Moreover, $ \{ b_n (\nu _{n} ^w - \mu) \} $ converges in probability to the zero element in $\linf$.
\end{theorem}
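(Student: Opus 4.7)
The plan is to reduce Theorem \ref{thm:1} to Theorem \ref{thm:Wu_thm2} by viewing $\xi_n^w$ as an ordinary (unweighted) empirical process on a transformed function class. Since $d\mu/d\nu = w$, one has $\mu(f) = \nu(wf)$ and $\nu_n^w(f) = \nu_n(wf)$, where $\nu_n = n^{-1}\sum_{i=1}^n \delta_{X_i}$ is the ordinary empirical measure based on the i.i.d.\ $\nu$-sample. Consequently,
$$
\xi_n^w(f) = \sqrt{n}\bigl(\nu_n(wf) - \nu(wf)\bigr), \qquad f \in \frakF,
$$
so, as an element of $\linf$, $b_n(\nu_n^w - \mu)$ coincides with the ordinary empirical process $b_n(\nu_n - \nu)$ evaluated on the transformed class $w\frakF := \{wf : f \in \frakF\}$. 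I would then apply Theorem \ref{thm:Wu_thm2} with $\nu$ in place of $\mu$ and $w\frakF$ in place of $\frakF$. The resulting rate function, $\inf\{I_\nu(\eta) : \eta(wf) = G(f) \text{ for all } f \in \frakF\}$, becomes \eqref{eq:rateXi} once $\eta$ is restricted to $\M_b^{\nu,0}$, outside of which $I_\nu$ is infinite.

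It then remains to verify the three hypotheses of Theorem \ref{thm:Wu_thm2} for $(w\frakF, \nu)$. For (i), the identity $d_2(wf,wg)^2 = \int w^2(f-g)^2\,d\nu$, the pointwise bound $(f-g)^4 \leq (f-g)^2$ (available because $0 \leq f,g \leq 1$), and Cauchy--Schwarz give
$$
d_2(wf,wg)^2 \leq \Bigl(\int w^4\, d\nu\Bigr)^{1/2} d_2(f,g);
$$
the exponential moments of $w$ make $\int w^4\,d\nu$ finite, and the $\nu$-Donsker hypothesis makes $(\frakF, d_2)$ totally bounded, so total boundedness transfers to $(w\frakF, d_2)$. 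For (ii), the Donsker assumption implies that $\sqrt{n}(\nu_n - \nu) = O_{\Prob}(1)$ in sup-norm over the relevant class, and dividing by $\lambda_n \to \infty$ delivers both the required convergence in probability and the final assertion of the theorem. For (iii), the envelope of $w\frakF$ is dominated by $w$ (since $f \leq 1$), and the exponential Chebyshev bound
$$
n\,\nu\bigl(w > u\lambda_n\sqrt{n}\bigr) \leq n\,e^{-\alpha u \lambda_n\sqrt{n}}\,\E_\nu[e^{\alpha w}]
$$
holds for every $\alpha > 0$; choosing $\alpha$ large and using $\sqrt{n}/\lambda_n = b_n \to \infty$ drives the limsup in \eqref{eq:condTail} below $-u^2/M$ for any prescribed $M$.

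The principal obstacle is condition (i): when $w$ is unbounded, total boundedness of $w\frakF$ does not follow automatically from that of $\frakF$, and the Cauchy--Schwarz argument above uses essentially both the uniform bound $0 \leq f \leq 1$ and the exponential moments of $w$. This is the precise reason the theorem restricts to classes bounded by one rather than allowing an arbitrary envelope. The remaining pieces---matching Wu's rate function with \eqref{eq:rateXi}, and deducing the convergence in probability to the zero element of $\linf$---are routine bookkeeping once the three conditions are in hand.
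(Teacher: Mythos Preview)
Your reduction to Theorem \ref{thm:Wu_thm2} applied to the unweighted process over $w\frakF$ is exactly the paper's strategy, and your treatment of condition (iii) via the envelope bound and exponential Chebyshev is equivalent to what the paper does. The identification of the rate function is also fine.

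There is, however, a genuine gap in your verification of condition (ii). You write that ``the Donsker assumption implies that $\sqrt{n}(\nu_n - \nu) = O_{\Prob}(1)$ in sup-norm over the relevant class''. The relevant class is $w\frakF$, but the hypothesis is only that $\frakF$ is $\nu$-Donsker; nothing you have written shows that $w\frakF$ is $\nu$-Donsker, and your Cauchy--Schwarz argument for (i), while correct for total boundedness, does not by itself deliver the uniform CLT or even tightness over $w\frakF$. Without this, neither condition (ii) nor the final convergence-in-probability assertion is justified.

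The paper closes this gap by invoking the permanence of the Donsker property (\cite{Wellner2000}, Section 2.10): since $\frakF$ is uniformly bounded and $w$ has all exponential moments (hence is square-integrable), the product class $w\frakF$ is itself $\nu$-Donsker. This single step yields both (i), via a standard consequence of the Donsker property (Theorem~14.6 in \cite{LeTa2010}), and (ii), since the uniform CLT for $w\frakF$ gives $\sqrt{n}(\nu_n - \nu) = O_{\Prob}(1)$ in $\lin(w\frakF)$. Your direct Cauchy--Schwarz route to (i) is a pleasant elementary alternative, but it does not replace the permanence argument needed for (ii); you should add that step.
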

%
\begin{proof}
Let $w\frakF = \{ w f \colon f \in \frakF\}$. A key observation is that for every $f \in \frakF$, $\nu _{n} ^w (f) = \nu _n (wf)$ and $\mu(f) = \nu (wf)$. Therefore, obtaining the LDP for $\{ b_n ( \nu _{n} ^w - \mu )\}$ in $\linf$ is identical to obtaining the LDP for $ \{ b_n(\nu_n - \nu) \}$ in $\lin(w\frakF)$. Indeed, consider the mapping that takes $F \in \lin (w \frakF)$ to $F^w \in \linf$, where $F^w(f) = F(wf)$. This mapping is continuous and the contraction principle can be applied. Hence, it suffices to show the LDP for the non-weighted empirical process $\{ b_n(\nu _n - \nu) \}$ in $\lin (w \frakF)$. Moreover, the rate function implied by the contraction principle is precisely \eqref{eq:rateXi}.


The class $\frakF$ is by assumption $\nu$-Donsker. The assumptions of a uniform bound on $\frakF$, $\E _{\nu} [\textrm{exp}\{\alpha w(X) \}] < \infty$ for every $\alpha > 0$ and the permanence of the Donsker property \cite[Section 2.10]{Wellner2000} imply that $w\frakF$ is $\nu$-Donsker as well. It follows from Theorem 14.6 in \cite{LeTa2010} that $(w\frakF, d_2)$ is totally bounded. 
The Donsker property gives the (uniform) central limit theorem for $\nu _n - \nu$ and hence weak convergence of the sequence $\{ \sqrt{n} (\nu _n - \nu) \}$. This implies that the sequence $\{ b_n (\nu _n - \nu) \}$ converges to $0$ in probability; condition $(ii)$ of Theorem \ref{thm:Wu_thm2} holds. Remains to check \eqref{eq:condTail}. First, note that under the assumption on $\frakF$,
$$ \E _{\nu} [\textrm{exp} \{ \alpha \norm{f(X)} _{w \frakF} \}] \leq \E _{\nu} [\textrm{exp} \{ \alpha w(X) \} ].$$
Thus, under the assumption that $w(X)$ has finite exponential moments of all orders, condition $(iii)$ of Theorem \ref{thm:Wu_thm2} is satisfied; see comment right after the statement of the main theorem in \cite{Ledoux92}. Hence, by Theorem \ref{thm:Wu_thm2}, $\{ b_n (\nu _n - \nu ) \} $ satisfies the LDP in $\lin (w \frakF)$. This completes the proof.
\end{proof}
\begin{remark}
For different sequences $\{ \lambda _n \}$ the assumption on the exponential moments of $\norm{w(X)f(X)} _{\frakF}$ can be relaxed. For example, if $\lambda _n$ is on the form 
$\lambda _n = n ^{1/p - 1/2}$ for $p \in (1,2)$, it is enough to have $\E _{\nu} [\textrm{exp} \{ \alpha \norm{w(X) f(X)} _{\frakF} ^{2-p}] < \infty$ for every $\alpha > 0$; see Corollary 1 in \cite{Ledoux92}. 
\end{remark}
When the weight function $w$ is bounded on $\textrm{supp}( {\frakF})$ the restrictions on $\{ \lambda _n \}$ can be dropped. This is similar to Theorem 2 in \cite{Wu94}.  
\begin{theorem}
\label{thm:2}
	Let $\frakF$ be a class of real valued functions such that $0\leq f \leq 1$ for every $f \in \frakF$. Suppose that $w$ is bounded on $\text{supp}(\frakF)$. If $\frakF$ is $\nu$-Donsker, then the empirical process sequence $ \{ b_n ( \nu _n ^w - \mu ) \} $ corresponding to importance sampling satisfies the LDP in $\linf$ with speed $\lambda _n ^{-2}$ and rate function given by \eqref{eq:rateXi}. Moreover, $ \{ b_n ( \nu _n ^w - \mu ) \} $ converges in probability to the zero element in $\linf$.
\end{theorem}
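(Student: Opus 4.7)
The plan is to follow the same route as the proof of Theorem \ref{thm:1} and reduce the problem to a statement about a non-weighted empirical process indexed by $w\frakF$; the only substantive change is that, in the final step, Theorem 2 of \cite{Wu94} is invoked in place of Theorem \ref{thm:Wu_thm2}. The identities $\nu_n^w(f) = \nu_n(wf)$ and $\mu(f) = \nu(wf)$ for $f \in \frakF$ show that the LDP for $\{b_n(\nu_n^w - \mu)\}$ in $\linf$ is equivalent to the LDP for $\{b_n(\nu_n - \nu)\}$ in $\lin(w\frakF)$. Transferring the LDP through the continuous map $\lin(w\frakF) \to \linf$ defined by $F \mapsto F(w\,\cdot)$ via the contraction principle yields the rate function \eqref{eq:rateXi}.

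First I would verify that $w\frakF$ is $\nu$-Donsker. Since each $f \in \frakF$ vanishes off $\text{supp}(\frakF)$ and $w$ is bounded on $\text{supp}(\frakF)$ by some constant $M$, the product $wf$ coincides with $(w\,I\{\text{supp}(\frakF)\})f$, so multiplication by $w$ amounts to multiplication by a bounded measurable function. The permanence of the Donsker property \cite[Section 2.10]{Wellner2000} then gives that $w\frakF$ is $\nu$-Donsker, and Theorem 14.6 of \cite{LeTa2010} gives that $(w\frakF, d_2)$ is totally bounded. The uniform central limit theorem furnished by the Donsker property, combined with $\lambda_n \rightarrow \infty$, implies that $b_n(\nu_n - \nu) \to 0$ in probability in $\lin(w\frakF)$, which is the analogue of condition $(ii)$ of Theorem \ref{thm:Wu_thm2} and also gives the second assertion of the theorem upon transferring back through the contraction map.

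The crucial step, and the one that separates this result from Theorem \ref{thm:1}, is the application of Theorem 2 of \cite{Wu94}. Every element of $w\frakF$ takes values in $[0,M]$, so after rescaling by $1/M$ the class is uniformly bounded in $[0,1]$. For such classes Wu's Theorem 2 establishes the LDP without any restriction on $\{\lambda_n\}$ beyond $\lambda_n \rightarrow \infty$ and $\lambda_n = o(\sqrt{n})$, and without the tail condition \eqref{eq:condTail} (which would in any case be trivial here). Applying that result to the rescaled class and then undoing the scaling -- which merely rescales the rate function and leaves the LDP intact -- gives the LDP for $\{b_n(\nu_n - \nu)\}$ in $\lin(w\frakF)$ with speed $\lambda_n^{-2}$. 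The main obstacle is conceptual rather than technical: confirming that mere boundedness of $w$ on $\text{supp}(\frakF)$, in place of the exponential moment assumption of Theorem \ref{thm:1}, suffices both to preserve the Donsker property of $w\frakF$ and to place the class within the hypotheses of Wu's bounded-class LDP. Both facts follow directly from the uniform bound on $w|_{\text{supp}(\frakF)}$, so no further work is required.
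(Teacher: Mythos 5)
Your proposal is correct and matches the route the paper indicates: the paper omits the details, saying only that the result follows by adapting Theorem 2 of \cite{Wu94} using the bound on $w$, and your reduction to the class $w\frakF$, the verification of its Donsker property and total boundedness, and the application of Wu's bounded-class MDP after rescaling to $[0,1]$ supply exactly those omitted details. The only cosmetic difference is that you invoke the statement of Wu's Theorem 2 on the rescaled class rather than modifying its proof, which is if anything the cleaner way to do it.
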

Theorem \ref{thm:2} can be shown by modifying the proof of Theorem 2 in \cite{Wu94} in a direct way using the the assumed bound on $w$. The details are omitted.

%
%
%
%
%
\section{Moderate deviations for importance sampling estimators of risk measures}
\label{sec:MDPrisk}
Henceforth, let the underlying space be $E = \R$ and assume that $\{ \lambda _n \}$ satisfies \eqref{eq:cond}. Note that due to Theorem \ref{thm:2} the results of this section will hold without this assumption if $w$ is bounded on $\textrm{supp}(\frakF)$. Let $F$ denote the distribution function of $\mu$, $F(t) = \mu (I\{ \cdot \leq t \})$, and let $T$ be the 
\emph{tail} of $F$, $T(t) = 1- F(t)$. Denote by $T_{n} ^w$ the importance sampling estimator of $T$,
$$ T_{n} ^w (t) = \nu _{n} ^w (I\{\cdot > t\}) , \ t\in \R.$$
Using Theorem \ref{thm:1} a moderate deviation result is easily proved for the importance sampling estimator of the tail $T$. Recall from Section \ref{sec:preliminaries} that, to ensure the necessary measurability, for this first result the space $\linf$ is equipped with the $\sigma$-algebra generated by all balls and coordinate projections (see \cite{GaoZhao2011}). For the remaining results no such assumptions are made.
\begin{corollary}
\label{cor:tailIS}
	Let $\frakF _a = \{ I\{ \cdot > t\} \colon t \geq a \}$ for some $a > 0$. Suppose that $\nu$ satisfies the assumptions of Theorem \ref{thm:1}. Then, the sequence
$ \{ b_n (T_{n} ^w - T) \} $ satisfies the LDP in $\lin [a,\infty]$ with speed $\lambda _n^{-2}$ and rate function $I ^w _{\frakF _a}$ given by 
\begin{equation*}
\begin{split}
	I^w _{\frakF _a}(G) = \inf \Big \{ I_{\nu} (\eta) \colon \eta \in \M _b ^{\nu, 0}, \eta(I \{ \cdot > t\} w) = G(t) \ \forall t \in [a,\infty] \Big \},
\end{split}
\end{equation*}
where $G \in \lin [a,\infty]$. Moreover, the sequence converges in probability to the zero element in $\lin [a,\infty]$.
\end{corollary}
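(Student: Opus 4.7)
The plan is to derive the corollary as an essentially direct specialization of Theorem \ref{thm:1} to the class $\frakF_a = \{ I\{\cdot > t\} \colon t \geq a\}$, composed with a canonical isometric identification of $\linf$ (for this specific $\frakF = \frakF_a$) with $\lin[a,\infty]$. Before invoking Theorem \ref{thm:1}, I would verify its hypotheses for $\frakF_a$: every member is an indicator, so the required uniform bound $0 \leq f \leq 1$ holds trivially; the exponential-moment condition on $w$ and the condition \eqref{eq:cond} on $\{\lambda_n\}$ are inherited from the corollary's assumptions; and the $\nu$-Donsker property is classical, since the collection of indicators of right half-lines in $\R$ has VC index $2$ and is therefore universally Donsker, and $\frakF_a$ is a subclass.

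Once Theorem \ref{thm:1} is applied to $\frakF_a$, it delivers the LDP for $\{ b_n(\nu_n^w - \mu)\}$ in $\ell_\infty(\frakF_a)$ at speed $\lambda_n^{-2}$ with rate function $I^w_{\frakF_a}$ in the form \eqref{eq:rateXi}, together with convergence in probability to the zero element. To pass to $\lin[a,\infty]$ I would introduce the bijection $\Psi \colon \ell_\infty(\frakF_a) \to \lin[a,\infty]$ defined by $\Psi(G)(t) = G(I\{\cdot > t\})$. Since $\frakF_a$ is indexed exactly by $t \in [a,\infty]$, $\Psi$ is an isometry for the respective sup-norms; moreover, by the definitions of $T_n^w$ and $T$, one has $\Psi(b_n(\nu_n^w - \mu))(t) = b_n(T_n^w(t) - T(t))$. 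The contraction principle through the continuous map $\Psi$ therefore transfers both the LDP and the limit in probability to $\lin[a,\infty]$, and reindexing the infimum over $f \in \frakF_a$ as an infimum over $t \in [a,\infty]$ produces exactly the rate function displayed in the corollary.

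I anticipate no serious technical obstacle; the proof is essentially a routing of Theorem \ref{thm:1} through an isometry. The only bookkeeping worth highlighting is measurability: as noted in the paragraph preceding the corollary, $\lin[a,\infty]$ is equipped with the $\sigma$-algebra generated by balls and coordinate projections, and under $\Psi$ balls map to balls (by isometry) while coordinate projections at $t \in [a,\infty]$ correspond to evaluations at $I\{\cdot > t\} \in \frakF_a$. The measurable structures therefore match, and no additional outer/inner probability manipulations are needed beyond those already absorbed into the statement of Theorem \ref{thm:1}.
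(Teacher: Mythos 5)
Your proposal is correct and takes essentially the same route as the paper, whose entire proof is that $\frakF_a$ is $\nu$-Donsker (being a VC class of indicators of half-lines) so Theorem \ref{thm:1} applies directly. The extra detail you supply on the isometric identification of $\ell_\infty(\frakF_a)$ with $\lin[a,\infty]$ and the matching of $\sigma$-algebras is left implicit in the paper but is accurate.
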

\begin{proof} The proof follows from Theorem \ref{thm:1} since the class $\frakF _a$ is $\nu$-Donsker for any probability measure $\nu$.
\end{proof}

With the uniform moderate deviation principle established for importance sampling related to the tail of the original distribution, the corresponding results for estimators of certain functionals can be obtained through the delta method (Theorem \ref{thm:GaoZhao}). We illustrate this by considering importance sampling estimation of the quantile function corresponding to $\mu$. For a non-increasing c\`{a}dl\`{a}g function $H \colon \R \mapsto \R$, define the inverse map
\begin{equation*}
	\Phi_p(H) = H^{-1}(p) = \inf \{u \colon H(u) \leq p \}, \ p\in (0,1).
\end{equation*}
The importance sampling estimator of the true quantile function $T^{-1}$ is denoted by $(T_{n} ^w ) ^{-1}$.
\begin{theorem}
\label{thm:quantileIS}
Suppose $F$ has continuous density $f>0$ with respect to Lebesgue measure on the interval $[T^{-1}(p)-\epsilon, T^{-1}(q) + \epsilon]$ for $0<q<p<1$ and some $\epsilon > 0$. If the sampling distribution $\nu$ satisfies the assumptions of Theorem \ref{thm:1}, then the sequence
\begin{equation*}
\label{eq:seqQuantile}
\bigl \{ b_n \bigl( (T_n ^w )^{-1} - T^{-1} \bigr) \bigr \}, 
 \end{equation*}
satisfies the LDP in $\lin [q,p]$ with speed $\lambda _n ^{-2}$ and rate function
\begin{align}
\label{eq:rateQuant}
I ^w _{T^{-1}}(G) = \inf \Big \{ I_{\nu} (\eta) \colon \eta \in \M _b ^{\nu, 0}, \frac{\eta(I\{ \cdot > T^{-1}(u) \} w)}{f(T^{-1}(u))} = G(u) \  \forall u\in [q,p] \Big \}.
\end{align}	
Moreover, the sequence converges in probability to the zero element in $\lin [q,p]$.
\end{theorem}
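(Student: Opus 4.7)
The plan is to derive this theorem as a direct consequence of Corollary \ref{cor:tailIS} together with the delta method for large deviations (Theorem \ref{thm:GaoZhao}), applied to the quantile (inverse) functional.

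First, I would choose $a \in (0, T^{-1}(p)-\epsilon)$ so that the relevant interval $[T^{-1}(p)-\epsilon, T^{-1}(q)+\epsilon]$ lies in $[a,\infty)$. By Corollary \ref{cor:tailIS} the sequence $\{b_n(T_n^w - T)\}$ satisfies the LDP in $\ell_\infty[a,\infty]$ with speed $\lambda_n^{-2}$ and rate function $I^w_{\frakF_a}$. To transfer this to the quantile functional I will use that the map $\Phi \colon H \mapsto H^{-1}$, defined on the set of nonincreasing c\`adl\`ag elements of $\ell_\infty[a,\infty]$ and taking values in $\ell_\infty[q,p]$, is Hadamard differentiable at $T$ tangentially to the set $\mathcal{D}_0$ of functions that are continuous on $[T^{-1}(p)-\epsilon, T^{-1}(q)+\epsilon]$, with derivative
\begin{equation*}
\Phi_T'(h)(u) = \frac{h(T^{-1}(u))}{f(T^{-1}(u))}, \qquad u \in [q,p].
\end{equation*}
This is the standard inverse-map delta result (see, e.g., \cite{Wellner2000}); the sign comes from $T' = -f$, and the positivity of $f$ on the prescribed interval is exactly what makes the expression meaningful.

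To invoke Theorem \ref{thm:GaoZhao} I must verify that $\{I^w_{\frakF_a} < \infty\} \subset \mathcal{D}_0$. Suppose $I^w_{\frakF_a}(G) < \infty$; then there is $\eta \in \M_b^{\nu,0}$ with $d\eta/d\nu = \phi \in L_2(\nu)$ and $G(t) = \eta(I\{\cdot > t\} w)$ for $t \in [a,\infty]$. Using $w d\nu = d\mu$ on $\mathrm{supp}(\mu)$ and Cauchy-Schwarz (together with the exponential-moment assumption on $w$ carried over from Theorem \ref{thm:1}), $\phi \in L_1(\mu)$, so
\begin{equation*}
G(t) = \int_t^\infty \phi(x) f(x)\, dx
\end{equation*}
on the interval where $\mu$ admits the continuous density $f$. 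Dominated convergence then gives continuity of $G$ on $[T^{-1}(p)-\epsilon, T^{-1}(q)+\epsilon]$, so $G \in \mathcal{D}_0$ and the hypothesis of the delta method is met.

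Applying Theorem \ref{thm:GaoZhao} with $r_n = b_n$, $c_n^{-1} = \lambda_n^{-2}$ yields the LDP for $\{b_n(\Phi(T_n^w) - \Phi(T))\} = \{b_n((T_n^w)^{-1} - T^{-1})\}$ in $\ell_\infty[q,p]$ with speed $\lambda_n^{-2}$ and rate function
\begin{equation*}
I^w_{T^{-1}}(G) = \inf\bigl\{ I^w_{\frakF_a}(h) : \Phi_T'(h) = G \bigr\}.
\end{equation*}
Substituting the definition of $I^w_{\frakF_a}$ and composing the constraints $h(t) = \eta(I\{\cdot > t\} w)$ with $\Phi_T'(h)(u) = h(T^{-1}(u))/f(T^{-1}(u)) = G(u)$ produces exactly \eqref{eq:rateQuant}. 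Convergence in probability of $\{b_n((T_n^w)^{-1} - T^{-1})\}$ to zero then follows from the corresponding statement for $\{b_n(T_n^w - T)\}$ in Corollary \ref{cor:tailIS} via the functional delta method (continuous mapping of the linearization). The main technical obstacle is the verification that $\{I^w_{\frakF_a} < \infty\}$ is contained in the tangent set $\mathcal{D}_0$; once this regularity of finite-rate perturbations is secured, everything else is a routine application of Theorem \ref{thm:GaoZhao} and rewriting of the infimum.
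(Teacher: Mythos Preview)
Your proposal is correct and follows essentially the same route as the paper: an LDP for $\{b_n(T_n^w-T)\}$ on a suitable interval, Hadamard differentiability of the inverse map (with derivative $h\mapsto h(T^{-1}(\cdot))/f(T^{-1}(\cdot))$), and an application of Theorem~\ref{thm:GaoZhao}, followed by unwinding the nested infimum to obtain \eqref{eq:rateQuant}. The only notable differences are cosmetic: the paper restricts to a bounded interval $[a,b]=[T^{-1}(p)-\epsilon,T^{-1}(q)+\epsilon]$ before inverting and then proves the rate-function identity by an explicit two-sided $\delta$-argument, whereas you work on $[a,\infty]$ and add the (useful, and omitted in the paper) verification that $\{I^w_{\frakF_a}<\infty\}\subset\mathcal{D}_0$.
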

\begin{proof} A simple modification of Corollary \ref{cor:tailIS} gives the LDP for $\{ b_n ( T_{n} ^w - T) \} $ on $\lin [a,b]$ for any $b> a$. 
Take $a = T^{-1}(p)-\epsilon$ and $b = T^{-1}(q) + \epsilon$ and let $\frakF _{a,b}$ be the corresponding collection of indicator functions. 
Lemmas 3.9.20 and 3.9.23 in \cite{Wellner2000} state that, when viewed as a map from the set of distribution functions restricted to $[a,b]$ into $\lin [q,p]$, the left-continuous version of the inverse map is Hadamard differentiable at $F$ tangentially to the set of continuous functions on $[a,b]$. 
Hadamard differentiability at $T$ holds in the same way for the right-continuos version of the inverse map, the one considered here. Since the derivative of the tail $T$ is $-f$ the corresponding derivative of the inverse is $\alpha \mapsto \frac{\alpha(T^{-1})}{f(T^{-1})}$. Hadamard differentiability together with the above modification of Corollary \ref{cor:tailIS} and Theorem \ref{thm:GaoZhao} imply that the quantile process $\{ b_n ((T_{n} ^w) ^{-1} - T^{-1})\}$ satisfies the LDP with speed $\lambda_n ^{-2}$ and rate function
\begin{align*}
\tilde{I} ^w _{T^{-1}}(G) &= \inf \Big \{ I^w _{\frakF _{a,b}}(\alpha) \colon \alpha \in \lin [a,b], \ \frac{\alpha(T^{-1}(u))}{f(T^{-1}(u))} = G(u) \  \forall u\in [q,p] \Big \},
\end{align*}
for $G \in \lin[q,p]$.
Next, we identify the rate function with $I^w _{T^{-1}}$ in \eqref{eq:rateQuant} by showing inequality in both directions. Take an arbitrary $G \in \lin [q,p]$. First, suppose that there is either no $\alpha \in \lin [a,b]$ such that $\alpha (T^{-1} (u)) = f(T^{-1}(u))G(u)$ for each $u \in [q,p]$ or, if such an $\alpha$ exists, no $\eta \in \M _b ^{\nu, 0}$ such that $\eta (I \{ \cdot \geq t \}w) = \alpha (t)$ for each $t \in [a,b]$. Then, both $\tilde{I}^w _{T^{-1}} (G)$ and $I ^w _{T^{-1}}$ are infinite and equality trivially holds. Therefore, we can assume that $G \in \lin [q,p]$ is such that that we can choose $\alpha ^* \in \lin [a,b]$ and $\eta^* \in \M _b ^{\nu, 0}$ such that $\alpha ^* (T^{-1}(u)) = G(u) f(T^{-1}(u))$ for each $u\in [q,p]$ and $\eta ^* (I \{ \cdot \geq t \}w) = \alpha ^*(t)$ for each $t \in [a,b]$. 
Clearly,
$$Ê\tilde{I} ^w _{T^{-1}} (G) \leq I ^w _{\frakF _{a,b}} (\alpha ^*) \leq I_{\nu}(\eta ^*).$$
The left-hand side has no dependence on $\eta^*$ and taking infimum yields
\begin{align*} 
\tilde{I} ^w _{T^{-1}} (G) & \leq \inf \Big \{ I_{\nu} (\eta) \colon \eta \in \M _b ^{\nu, 0}, \frac{ \eta (I\{ \cdot > T^{-1}(u) \}w)}{f(T^{-1}(u))} = G(u) \ \forall u\in [q,p] \Big \} \\
& = I ^w _{T^{-1}} (G).
\end{align*}
For the reverse inequality, note that for any $\delta >0$, there is $\alpha _{\delta} \in \lin [a,b]$ such that $\alpha _{\delta} (T^{-1} (u)) = f(T^{-1}(u))G(u)$ for each $u \in [q,p]$ and 
$$Ê\tilde{I} ^w _{T^{-1}}(G) + \delta \geq I ^w _{\frakF _{a,b}}(\alpha _{\delta}).$$
Similarly, there is $\eta _{\delta} \in \M _b ^{\nu, 0} $ such that $\eta _{\delta} (I\{ \cdot > t\}w) = \alpha _{\delta}(t)$ for each $t \in [a,b]$ and
$$ÊI ^w _{\frakF _{a}}(\alpha _{\delta}) + \delta \geq I_{\nu}(\eta _{\delta}).$$
Together these inequalities yield
$$Ê\tilde{I} ^w _{T^{-1}}(G) + 2\delta \geq I _{\nu} (\eta _{\delta}).$$
The above holds for any $\delta >0$ and since $G$ was arbitrary we conclude that $I ^w_{T^{-1}}$ as defined in \eqref{eq:rateQuant} is indeed the rate function for the quantile process. 
 
Finally, the convergence in probability follows from the modified version of Corollary \ref{cor:tailIS} and the (standard) delta method.
\end{proof}

Next, we study moderate deviations related to importance sampling estimation of the risk measure Expected Shortfall. For a non-increasing c\`{a}dl\`{a}g function $H$ and $0<p<1$, let $\gamma_{p}(H)$ be defined by
$$Ê\gamma _p (H) = \frac{1}{p} \int _0 ^p H(u)du.$$
If $T$ is the tail of the distribution of a random variable then $\gamma_p(T^{-1})$ is called the Expected Shortfall at level $p$ and an importance sampling estimator is given by $\gamma _p ((T_{n} ^w)^{-1})$.

For the remainder of the paper we will let $q$ tend to zero along some monotone sequence $\{ q_m \}$ such that for each $m$, $T(T^{-1}(q_m)) = q_m $. This is possible since there can be at most countably many points $q$ such that $T$ is not continuous at $T^{-1}(q)$. In order to obtain the LDP for the empirical process which corresponds to Expected Shortfall we make the following assumptions. 
\begin{itemize}
\item{ $\mu$ has finite second moment,
\begin{equation}
\label{eq:hyp1}
	\E_{\mu}[X^2] < \infty, \tag{A1}
\end{equation}
and, as $m \rightarrow \infty$,
\begin{equation}
\label{eq:hyp2}
	q_m ^2 = o (f(T^{-1}(q_m))).	\tag{A2}	
\end{equation}}
\item{ The sampling distribution $\nu$ has finite weighted second moment,
\begin{equation}
\label{eq:hyp3}
	\E_{\nu} [(X w(X))^2] = \E _{\mu} [X^2 w(X)] < \infty,	\tag{A3}
\end{equation}
and, as $m \rightarrow \infty$,
\begin{equation}
\label{eq:hyp4}
	q_m ^2 \E_{\nu} [w(X)^2 I \{ X > T^{-1}(q_m)\} ] = o (f (T^{-1}(q_m))^2). \tag{A4}
\end{equation}}
\end{itemize}
Before stating the main result on estimators of Expected Shortfall we take a more detailed look at \eqref{eq:hyp1}-\eqref{eq:hyp4}. Consider first a regularly varying tail $T$ with index $-\alpha$, $\alpha >0$. That is, for every $t>0$,
$$Ê\lim _{x \rightarrow \infty} \frac{T(tx)}{T(x)} = t^{-\alpha}. $$
We let $L$ denote a generic slowly varying function, that is a function such that $L(tx)/L(x) \rightarrow 1$ as $x \rightarrow \infty$ (slowly varying at $\infty$). For a thorough treatment of regular variation and the results used in what follows, see \cite{resnick2008}. In order for $\mu$ to have a finite second moment it is required that $\alpha >2$. Consider the assumption \eqref{eq:hyp2}. By Karamata's Theorem, $f(x) \sim \alpha x^{-1} T(x)$ as $x \rightarrow \infty$. Thus, 
	$$ f(T^{-1} (q_m)) \sim \frac{\alpha T(T^{-1}(q_m))}{T^{-1}(q_m)} = \frac{\alpha q_m}{T^{-1}(q_m)} \ \textrm{as } m \rightarrow \infty,$$
which leads to
	$$Ê\frac{f(T^{-1}(q_m))}{q_m ^2} \sim \frac{\alpha}{q_m T^{-1} (q_m)} \ \textrm{as } m \rightarrow \infty.$$
The denominator goes to zero because the distribution has finite first moment. Note that in the regularly varying case this can be readily seen from the fact that the quantile behaves like $q_m ^{- 1/\alpha} L(q_m)$, where $L$ is slowly varying at $0$. Therefore, the denominator behaves like $ q_m ^{1 - 1/\alpha} L(q_m)$ which goes to zero for $\alpha > 1$. When the underlying distribution is light-tailed, e.g. Gaussian, the decay is even more rapid. Hence, assumption \eqref{eq:hyp2} is satisfied for a large class of both light-tailed and heavy-tailed distributions.

The more involved assumption is \eqref{eq:hyp4} concerning the sampling distribution. Again, let the tail $T$ be regularly varying with index $- \alpha$. Using the same asymptotic equivalence as above, 
\begin{align*}
	\frac{f(T^{-1}(q_m))^2}{q_m ^2 \E _{\nu} [w(X) ^2 I \{ÊX > T^{-1} (q_m)\}]} &\sim \frac{\alpha ^2 q_m ^2}{(T^{-1}(q_m) )^2} \frac{1}{q_m ^2 \E _{\nu} [w(X) ^2 I \{ÊX > T^{-1} (q_m)\}]} \\
	&= \frac{\alpha ^2}{(T^{-1}(q_m) )^2 \E _{\nu} [w(X) ^2 I \{ÊX > T^{-1} (q_m)\}]}, 
\end{align*}
as $m \rightarrow \infty$. Moreover, since $T^{-1} (q_m) \sim q_m ^{- 1/\alpha} L(q_m)$ as $m \rightarrow \infty$,
\begin{align*}
	\frac{f(T^{-1}(q_m))^2}{q_m ^2 \E _{\nu} [w(X) ^2 I \{ÊX > T^{-1} (q_m)\}]} \sim \frac{\alpha ^2}{ q_m ^{-2 /\alpha} L(q_m) \E _{\nu} [w(X) ^2 I \{ÊX > T^{-1} (q_m)\}]}.
\end{align*}
For standard Monte Carlo ($w \equiv 1$), $\E _ {\nu} [w(X)^2 I \{ÊX > T^{-1} (q_m)\}] = q_m$ and 
\begin{align*}
	\frac{f(T^{-1}(q_m))^2}{q_m ^2 \E _{\nu} [w(X) ^2 I \{ÊX > T^{-1} (q_m)\}]} \sim q_m ^{2 /\alpha -1} L(q_m)^{-1},
\end{align*}
which goes to $ \infty$ if $\alpha > 2$. The inverse converges to $0$ as $m \rightarrow \infty$ and a completely analogous analysis for a light-tailed (Gaussian-like) setting yields the same result. This shows that even for standard Monte Carlo the assumption \eqref{eq:hyp4} is satisfied for a rather large class of distributions.

Let $\sigma _{q,p} ^2 (w)$ and $\sigma _p ^2 (w)$ be defined by
\begin{align*}
	p^2 \sigma _{q,p}^2 (w) & = ( T^{-1}(q) - T^{-1}(p))^2 \int _{T^{-1}(q)} ^{\infty} w(x) \nu(dx) \\ 
	& \quad + \int _{T^{-1}(p)} ^{T^{-1}(q)} (x - T^{-1}(p))^2 w(x)^2 \nu(dx) \\
	& \qquad - \Big( \int _{T^{-1}(p)} ^{T^{-1}(q)} (x- T^{-1}(p)) \mu (dx) + q (T^{-1}(q) - T^{-1}(p)) \Big ) ^2,
\end{align*}
and
\begin{align*}
	\sigma _p ^2 (w) = \frac{1}{p^2} \int _{T^{-1}(p)} ^{\infty} (x - T^{-1}(p))^2 w(x)^2 \nu(dx) - \frac{1}{p^2} \Big( \int _{T^{-1}(p)} ^{\infty} (x - T^{-1}(p)) \mu(dx) \Big)^2. 
\end{align*}
\begin{theorem}
\label{thm:ESIS}
	Assume \eqref{eq:hyp1}-\eqref{eq:hyp4} and that the hypotheses of Theorem \ref{thm:quantileIS} hold for each $q_m$. Then, the sequence
	\begin{equation*}
		\bigl \{ b_n \bigl( \gamma _p((T_{n} ^w)^{-1}) - \gamma _p (T^{-1}) \bigr ) \bigr \},
	\end{equation*}
	satisfies the LDP in $\R$ with speed $\lambda _n^{-2}$ and rate function
	\begin{align}
	\label{eq:rateES}
		I^w _{p} (z) = \frac{z^2}{2 \sigma_p ^2 (w)}, \ z \in \R.
	\end{align}
\end{theorem}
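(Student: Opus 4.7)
The proof strategy is the standard recipe for extending the delta method to a functional that is not Hadamard differentiable on the whole tangent space: \emph{truncate, apply the delta method to the truncated functional, and remove the truncation via exponential approximation.} For each $q_m>0$, introduce the truncated Expected Shortfall functional
$$\gamma_p^{(m)}(H) = \frac{1}{p}\int_{q_m}^p H(u)\,du,$$
which is a bounded linear map $\lin[q_m,p]\to \R$ and hence trivially Hadamard differentiable. Composing with the inverse map at $T$ (Hadamard differentiable tangentially to continuous functions, as in the proof of Theorem \ref{thm:quantileIS}) and applying Theorem \ref{thm:GaoZhao} to the modification of Corollary \ref{cor:tailIS} on $\lin[T^{-1}(p)-\epsilon,T^{-1}(q_m)+\epsilon]$, one obtains the MDP for $\{b_n(\gamma_p^{(m)}((T_n^w)^{-1})-\gamma_p^{(m)}(T^{-1}))\}$ in $\R$ with speed $\lambda_n^{-2}$ and rate function
$$J_m(z) = \inf\left\{I_\nu(\eta) : \eta \in \M_b^{\nu,0},\ \frac{1}{p}\int_{q_m}^p \frac{\eta(I\{\cdot > T^{-1}(u)\}w)}{f(T^{-1}(u))}\,du = z\right\}.$$

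The next task is to identify $J_m(z) = z^2/(2\sigma_{q_m,p}^2(w))$. Fubini turns the linear constraint into $\eta(g_m)=z$, and the change of variables $u = T(v)$ inside the definition of $g_m$ yields the explicit piecewise formula
$$g_m(x) = \frac{w(x)}{p}\bigl(x\wedge T^{-1}(q_m) - T^{-1}(p)\bigr)^{+}.$$
Writing $\phi = d\eta/d\nu \in L_2(\nu)$, the minimisation becomes $\inf\{\tfrac12\int \phi^2\, d\nu : \int\phi\,d\nu=0,\ \int g_m\phi\,d\nu = z\}$. A Lagrange multiplier argument gives the minimiser $\phi = c(g_m - \E_\nu[g_m])$ with $c = z/\Var_\nu(g_m)$ and minimum value $z^2/(2\Var_\nu(g_m))$; expanding $\Var_\nu(g_m)$ through the three cases $x \leq T^{-1}(p)$, $T^{-1}(p)<x<T^{-1}(q_m)$ and $x\geq T^{-1}(q_m)$ identifies it with $\sigma_{q_m,p}^2(w)$.

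The main obstacle is the exponential approximation step, which must show that the integral over the extreme left tail $[0,q_m]$ contributes negligibly on the MDP scale once $m$ is taken large. Write
$$\gamma_p((T_n^w)^{-1}) - \gamma_p(T^{-1}) = \bigl[\gamma_p^{(m)}((T_n^w)^{-1}) - \gamma_p^{(m)}(T^{-1})\bigr] + R_{n,m},$$
with $R_{n,m}=\frac{1}{p}\int_0^{q_m}((T_n^w)^{-1}(u)-T^{-1}(u))\,du$. A layer-cake identity for the integral of a non-increasing inverse gives
$$p R_{n,m} = q_m\bigl((T_n^w)^{-1}(q_m)-T^{-1}(q_m)\bigr) + \int_{(T_n^w)^{-1}(q_m)}^{\infty} T_n^w(x)\,dx - \int_{T^{-1}(q_m)}^{\infty} T(x)\,dx,$$
reducing matters to controlling the quantile fluctuation at level $q_m$ and the weighted tail sum $\nu_n^w(x I\{x > T^{-1}(q_m)\})$. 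The plan is to apply exponential Chebyshev/Bernstein bounds to these random quantities; hypotheses \eqref{eq:hyp1}--\eqref{eq:hyp4} are tailored precisely so that the resulting rates vanish after letting $n\to\infty$ and then $m\to\infty$, yielding
$$\lim_{m\to\infty}\limsup_{n\to\infty}\frac{1}{\lambda_n^2}\log\Prob(b_n|R_{n,m}|>\delta) = -\infty, \qquad \delta>0.$$
I expect the technical content of these bounds to appear as lemmas in Section \ref{sec:aux}.

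Finally, dominated convergence shows that $\sigma_{q_m,p}^2(w)\to\sigma_p^2(w)$ as $m\to\infty$: indeed $T^{-1}(q_m)\to\infty$, $q_m\to 0$, and the integrands are majorised by $\mu$- and $\nu$-integrable functions by \eqref{eq:hyp1} and \eqref{eq:hyp3}; hence $J_m(z)\to z^2/(2\sigma_p^2(w))$ pointwise. Combining the uniform exponential approximation of $R_{n,m}$, the MDP for each truncated sequence with rate $J_m$, and the convergence $J_m \to I_p^w$, a standard exponential approximation theorem for LDPs (in the spirit of Theorem 4.2.16 of \cite{Dembo98}) delivers the MDP for $\{b_n(\gamma_p((T_n^w)^{-1})-\gamma_p(T^{-1}))\}$ in $\R$ with speed $\lambda_n^{-2}$ and rate function $I_p^w(z) = z^2/(2\sigma_p^2(w))$.
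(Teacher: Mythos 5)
Your proposal follows essentially the same route as the paper: truncate $\gamma_p$ at $q_m$, obtain the MDP for the truncated functional via Hadamard differentiability and Theorem \ref{thm:GaoZhao} with rate $z^2/(2\sigma^2_{q_m,p}(w))$ (the paper's Lemma \ref{lemma:ESqpIS}), show the remainder $R_{n,m}$ is an exponentially good approximation (Lemma \ref{lemma:expApprox}), and conclude with Theorem 4.2.16 of \cite{Dembo98} together with $\sigma^2_{q_m,p}(w)\to\sigma_p^2(w)$. The only cosmetic difference is in the deferred exponential-negligibility step, where the paper controls the decomposed terms via one-dimensional LDPs whose rate constants $\kappa_i(q_m,\delta)$ are computed explicitly by convex optimization and shown to diverge under \eqref{eq:hyp1}--\eqref{eq:hyp4}, rather than via Bernstein-type bounds.
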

\begin{remark}
Since standard Monte Carlo corresponds to the special case $w \equiv 1$, Theorem \ref{thm:ESIS} establishes the moderate deviation principle for standard Monte Carlo estimation of Expected Shortfall. In \cite{GaoWang2011} the authors study various asymptotics of Expected Shortfall (there called conditional Value-at-Risk, CVaR) based on standard Monte Carlo estimation. Thus, we obtain as a special case of Theorem \ref{thm:ESIS} their result on the moderate deviation principle \cite[Theorem 1.3]{GaoWang2011}. In order to see that the two rate functions are indeed the same, note first that from Theorem \ref{thm:ESIS}, when $w \equiv 1$, the rate function is $I^w _p (z) = z^2 / (2 \sigma _p ^2 (1))$, with
\begin{align}
	\sigma _{p} ^2 (1) = \frac{1}{p^2} \int _{T^{-1}(p)} ^{\infty} (x - T^{-1}(p))^2 \mu(dx) - \frac{1}{p^2}\Big( \int _{T^{-1}(p)} ^{\infty} (x - T^{-1}(p)) \mu(dx) \Big)^2. 
\end{align}
In \cite{GaoWang2011} the Expected Shortfall is defined using the left-continuous inverse. Given that the inverse is continuous at $\alpha \in (0,1)$, the the left-continuous inverse evaluated at $\alpha$ is equal to the right-continuous inverse $T^{-1}$ evaluated at $1-\alpha$:
\begin{align*}
	\inf \{ t \colon F(t) \geq \alpha \} = \inf \{ t \colon 1-\alpha \geq 1-F(t) \} = \inf \{ t \colon 1-\alpha \geq T(t) \} = T^{-1}(1-\alpha).
\end{align*}
Hence, we identify that the Expected Shortfall at level $\alpha$ as considered in \cite{GaoWang2011} is equivalent to the Expected shortfall at level $p = 1-\alpha$ using the above definition.
To see that the two rate functions agree, use that the rate function of \cite{GaoWang2011} is expressed in terms of the second moment of the random variable $Z(\alpha)$, 
\begin{align*}
	Z(\alpha) = \frac{1}{1-\alpha} (X - T^{-1}(1-\alpha))^+ - \frac{1}{1-\alpha} \int _{T^{-1}(1-\alpha)} ^{\infty} T(x) dx,
\end{align*}
with $X$ distributed according to $\mu$ as before. Namely, the rate function can be written as
\begin{align*}
	I(z) = \frac{z^2}{2 \E [Z(\alpha) ^2]}.
\end{align*}
Using Fubini's theorem it is readily seen that
\begin{align*}
	\E[Z(\alpha) ^2] = \sigma _{1-\alpha} ^2 (1). 
\end{align*}
Thus, the rate function of \cite{GaoWang2011} for Expected Shortfall at level $\alpha$ agrees with the rate function of Theorem \ref{thm:ESIS} when $p = 1-\alpha$ and $w \equiv 1$.
\end{remark}
In general, the map $T \mapsto \gamma _p (T^{-1})$ need not be Hadamard differentiable due to the fact that the $q$:th quantile may blow up at $q=0$. It may be that the quantile map does not yield an element of $\lin [0,p]$ because the resulting functional is not bounded. Therefore, it is not just a matter of applying the delta method in order to obtain the LDP for the Expected Shortfall. Instead, we use what in large deviation analysis is known as an \emph{exponentially good approximation} (cf.\ \cite{Dembo98}, Definition 4.2.14): Let $(\mathcal{X},d)$ be a metric space and for each $\delta > 0$ let 
$$Ê\Gamma _{\delta} = \{Ê(x, y) : d(x,y) \geq \delta \} \subset \mathcal{X} \times \mathcal{X}.$$
For all $n,m \in \mathbb{Z} _+$, let $(\Omega, \mathcal{F} _n, \Prob _{n,m})$ be a probability space and let the $\mathcal{X}$-valued random variables $\tilde{X} _{n}$ and $X_{n, m}$ be distributed according to the joint law $\Prob_{n, m},$ with marginals $\tilde{\mu}_{n}$ and $\mu _{n, m}$ respectively. $\{ÊX_{n, m}\}$ is called an exponentially good approximation of $\{Ê\tilde{X} _{n} \}$ if, for every $\delta > 0 $, the set $\{ \omega : (\tilde{X} _{n}, X _{n, m}) \in \Gamma _{\delta } \}$ is $\mathcal{F} _{n}$-measurable and for each $K > 0$ there is a $m_K$ such that 
$$Ê\limsup _{n \rightarrow \infty} \frac{1}{\lambda_n ^2} \log \Prob_{n, m} (\Gamma _{\delta}) \leq - K,$$
for all $m \geq m_K$. Similarly, the sequence of measures $\{ \mu _{n, m} \}$ is an exponentially good approximation of $\{ \tilde{\mu} \}$ if one can construct probability spaces $\{ (\Omega, \mathcal{F}_{n}, \Prob_{n, m}) \}$ as above.

The following is an outline of the proof of Theorem \ref{thm:ESIS}. The idea is to consider a truncated version of the mapping $\gamma _p$. For general non-increasing c\`{a}dl\`{a}g functions $H$ and $0<q<p<1$, define the mapping $\gamma _{q,p} \colon \lin [q,p] \mapsto \R$ by
$$ \gamma_{q,p}(H) = \frac{1}{p}\int _q ^p H(u)du. $$
When $T^{-1} \in \lin [q,p]$ the mapping $\gamma _{q,p}$ is Hadamard differentiable at $T^{-1}$ and, by an application of Theorem \ref{thm:GaoZhao}, the LDP for the empirical process corresponding to $\gamma _{q,p}$ follows immediately (Lemma \ref{lemma:ESqpIS} below). Notice that as $m \rightarrow \infty$, $q_m$ becomes arbitrarily close to $0$ and it is reasonable to think that the random variables $ \gamma _{q_m,p} ((T_n ^w)^{-1}) - \gamma _{q_m, p} (T^{-1})$ should be close to $\gamma _p ((T_n ^w ) ^{-1}) - \gamma_p (T^{-1})$. The next step of the proof of Theorem \ref{thm:ESIS} is to show that this is indeed so in the sense of an exponentially good approximation. That is, that one can choose $m$ large enough so that the difference $\gamma _{q_m} ((T_n ^w)^{-1}) - \gamma _{ T_m} (F^{-1})$
can be made arbitrarily small in the sense described above. Lemma \ref{lemma:expApprox} establishes this exponentially good approximation using an upper bound derived in \cite{Hult2011} for the probability of an absolute error $\geq \delta$, together with some one-dimensional LDP's originating from Corollary \ref{cor:tailIS} and Theorem \ref{thm:quantileIS}. Theorem 4.2.16 in \cite{Dembo98} states that if a sequence of random variables satisfy the LDP, any other sequence for which it is an exponentially good approximation will also satisfy the LDP. Moreover, the rate function is expressed in terms of the rate function associated with the first sequence. Hence, once we establish that $\gamma _{q_m,p} ((T_n ^w)^{-1}) - \gamma _{q_m, p} (T^{-1})$ is indeed an exponentially good approximation of $\gamma _p ((T_n ^w ) ^{-1}) - \gamma_p (T^{-1})$, Theorem \ref{thm:ESIS} is proved.

\begin{lemma}
\label{lemma:ESqpIS}
	Assume that the hypotheses of Theorem \ref{thm:quantileIS} are satisfied. Then, the sequence
	\begin{equation*}
	\label{eq:seqESqp}
		 \bigl \{ b_n \bigl( \gamma_{p,q} ((T_{n} ^w)^{-1}) - \gamma_{p,q} (T^{-1}) \bigr) \bigr \}, 
	\end{equation*}
	satisfies the LDP in $\R$ with speed $\lambda _n^{-2}$ and rate function
	\begin{equation}
	\label{eq:rateESqp}
		I^w _{q,p} (z) = \frac{z^2}{2 \sigma ^2 _{q, p}(w)}, \ z \in \R.
	\end{equation}
\end{lemma}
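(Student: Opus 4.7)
The strategy is to push the LDP of Theorem~\ref{thm:quantileIS} forward through the linear functional $\gamma_{q,p}$ by the delta method for large deviations (Theorem~\ref{thm:GaoZhao}), and then to simplify the resulting contracted rate function by solving a quadratic variational problem in $L^2(\nu)$.

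First, observe that $\gamma_{q,p} \colon \lin[q,p] \to \R$, $H \mapsto \tfrac{1}{p}\int_q^p H(u)\,du$, is bounded and linear (with $|\gamma_{q,p}(H)| \leq \tfrac{p-q}{p}\norm{H}_{[q,p]}$). It is therefore Fr\'echet- and a fortiori Hadamard-differentiable at every $\theta \in \lin[q,p]$ tangentially to the whole space, with derivative equal to $\gamma_{q,p}$ itself. Applying Theorem~\ref{thm:GaoZhao} with $\Phi = \gamma_{q,p}$, $\theta = T^{-1}$, $r_n = b_n$, speed $\lambda_n^{-2}$, and with input LDP provided by Theorem~\ref{thm:quantileIS}, immediately yields the LDP for the stated sequence with contracted rate function
\begin{equation*}
I(z) = \inf\Big\{ I_\nu(\eta) : \eta \in \M_b^{\nu,0},\; \frac{1}{p}\int_q^p \frac{\eta(I\{\cdot > T^{-1}(u)\}w)}{f(T^{-1}(u))}\,du = z \Big\}.
\end{equation*}

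Next I would rewrite the constraint explicitly. Writing $g = d\eta/d\nu$ and applying Fubini together with the change of variables $u = T(y)$ on $[T^{-1}(p), T^{-1}(q)]$ (valid since $f > 0$ on a neighbourhood of that interval, so $u \mapsto T^{-1}(u)$ is $C^1$ with derivative $-1/f(T^{-1}(u))$), the constraint reduces to $\int g\,\psi\, d\nu = z$, where
\begin{equation*}
\psi(x) = \frac{w(x)}{p}\int_{T^{-1}(p)}^{T^{-1}(q)} I\{x > y\}\,dy,
\end{equation*}
which vanishes on $(-\infty, T^{-1}(p)]$, equals $w(x)(x-T^{-1}(p))/p$ on $(T^{-1}(p), T^{-1}(q)]$, and equals $w(x)(T^{-1}(q)-T^{-1}(p))/p$ on $(T^{-1}(q),\infty)$. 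Together with the null-mass condition $\int g\,d\nu = 0$ coming from $\eta \in \M_b^{\nu,0}$, minimising $\tfrac{1}{2}\int g^2\,d\nu$ subject to these two linear constraints is the standard quadratic programme in $L^2(\nu)$: Lagrange multipliers produce the unique minimiser $g^{\star} = \lambda(\psi - \E_\nu[\psi])$ with $\lambda = z/\Var_\nu(\psi)$, and minimum value $z^2/(2\Var_\nu(\psi))$.

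It then remains to verify by direct computation that $\Var_\nu(\psi) = \sigma_{q,p}^2(w)$: the second moment $\E_\nu[\psi^2]$ splits into the two integrals appearing in the definition of $p^2 \sigma_{q,p}^2(w)$ (by splitting the integration region at $x = T^{-1}(q)$), and the mean $\E_\nu[\psi]$, computed using $w\,d\nu = d\mu$, supplies the subtracted square. I expect the main obstacle to be exactly this bookkeeping step: the constant shift coming from the zero-mean constraint must be carefully unpacked and matched, term by term, to the prescribed form of $\sigma_{q,p}^2(w)$. The delta-method step and the Hadamard differentiability of $\gamma_{q,p}$ are essentially immediate once noted.
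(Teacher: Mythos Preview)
Your proposal is correct and follows essentially the same route as the paper: both exploit that $\gamma_{q,p}$ is linear (hence Hadamard differentiable), apply the large-deviation delta method, rewrite the resulting linear constraint via Fubini and the change of variables $y=T^{-1}(u)$, and then solve the quadratic minimisation in $L^2(\nu)$. The only cosmetic difference is that the paper invokes the chain rule and Corollary~\ref{cor:tailIS} directly (composing the inverse map with $\gamma_{q,p}$), whereas you start from the already-established quantile LDP of Theorem~\ref{thm:quantileIS}; the resulting contracted rate function and the optimisation leading to $z^2/(2\sigma_{q,p}^2(w))$ are identical.
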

\begin{lemma}
\label{lemma:expApprox}
	Under the assumptions of Theorem \ref{thm:ESIS}, the sequence \\ $\{ b_n( \gamma_{q_m,p} ((T_{n} ^w)^{-1}) - \gamma_{q_m,p} (T^{-1})) \}$ is an exponentially good approximation of  \\$\{ b_n ( \gamma_{p} ((T_{n} ^w)^{-1}) - \gamma_{p} (T^{-1})) \}$.
\end{lemma}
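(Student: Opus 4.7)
\bigskip
\noindent\textbf{Proof plan for Lemma \ref{lemma:expApprox}.}

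Setting $t_m = T^{-1}(q_m)$, the quantity to control is
$$ \Delta_{n,m} \;:=\; b_n\bigl[(\gamma_p-\gamma_{q_m,p})((T_n^w)^{-1}) - (\gamma_p-\gamma_{q_m,p})(T^{-1})\bigr] \;=\; \frac{b_n}{p}\int_0^{q_m}\bigl[(T_n^w)^{-1}(u) - T^{-1}(u)\bigr]du,$$
and the task is to show that for every $\delta,K>0$ there is $m_K$ so that $\limsup_n \lambda_n^{-2}\log \P(|\Delta_{n,m}|\ge\delta)\le -K$ for all $m\ge m_K$. The plan is to apply the ``tail-integration'' identity
$$ \int_0^q H^{-1}(u)\,du \;=\; q\,H^{-1}(q) + \int_{H^{-1}(q)}^\infty H(x)\,dx,$$
which is valid (with a harmless correction for the step-function case) both for $H=T$ and, pathwise, for $H=T_n^w$. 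This decomposes $\Delta_{n,m}$ into a one-dimensional quantile-deviation piece, $(b_nq_m/p)\bigl[(T_n^w)^{-1}(q_m)-t_m\bigr]$, and a tail-integral piece, $(b_n/p)\bigl[\int_{(T_n^w)^{-1}(q_m)}^\infty T_n^w - \int_{t_m}^\infty T\bigr]$. It is exactly this decomposition that is exploited in the probability bound from \cite{Hult2011} cited right before the lemma.

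For the first piece I would invoke a one-dimensional projection of Theorem~\ref{thm:quantileIS}: at the single level $q_m$ the sequence $b_n[(T_n^w)^{-1}(q_m)-t_m]$ obeys an LDP with rate function $z^2 f(t_m)^2/(2\sigma_m^2)$, where $\sigma_m^2=\E_\nu[w(X)^2 I\{X>t_m\}]-q_m^2$. The Chebyshev/Chernoff-type upper bound at level $p\delta/(2q_m)$ therefore yields
$$\limsup_n\frac{1}{\lambda_n^2}\log\P\Bigl(\bigl|\tfrac{b_nq_m}{p}[(T_n^w)^{-1}(q_m)-t_m]\bigr|\ge \tfrac{\delta}{2}\Bigr) \;\le\; -\,\frac{\delta^2 p^2}{8}\cdot\frac{f(t_m)^2}{q_m^2\,\sigma_m^2},$$
and hypothesis \eqref{eq:hyp4} says precisely that $q_m^2\sigma_m^2/f(t_m)^2\to 0$, so the right-hand side tends to $-\infty$ with $m$. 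For the second piece I would combine the one-dimensional LDP for $b_n[T_n^w(t_m)-q_m]$ from Corollary~\ref{cor:tailIS} (with variance controlled by \eqref{eq:hyp3}) together with the quantile LDP just used to replace the random lower limit $(T_n^w)^{-1}(q_m)$ by $t_m$, while the deterministic residual $\int_{t_m}^\infty T(x)\,dx$ is made negligible on the scale $1/b_n$ by \eqref{eq:hyp1} and \eqref{eq:hyp2}. A union bound over the two pieces then completes the argument.

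The principal obstacle is that the ``one-dimensional LDPs'' we project out from Theorems \ref{thm:1} and \ref{thm:quantileIS} are stated for fixed levels, whereas here both the level $q_m$ and the variance scale $\sigma_m^2/f(t_m)^2$ depend on $m$; so one must keep the Chernoff exponents explicit rather than merely invoking the LDPs abstractly, and verify that the resulting $m$-dependent exponents do diverge. Assumptions \eqref{eq:hyp2}--\eqref{eq:hyp4} are engineered precisely for this: \eqref{eq:hyp4} forces the quantile-variance at level $q_m$ (which scales like $\sigma_m^2/f(t_m)^2$) to be so small relative to $1/q_m^2$ that the exponent in the quantile piece blows up, and \eqref{eq:hyp1}--\eqref{eq:hyp2} play the analogous role for the tail-integral piece. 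Handling the step-function character of $T_n^w$ when applying the tail-integration identity, and showing that the error thereby introduced is itself exponentially negligible, is the main technical nuisance but is routine given \eqref{eq:hyp3} and the regular-variation--type calculations sketched after the statement of Theorem \ref{thm:ESIS}.
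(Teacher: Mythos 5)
Your plan is essentially the paper's proof: the same decomposition of $\int_0^{q_m}[(T_n^w)^{-1}-T^{-1}]$ into a quantile piece, an integrated-tail piece, and a cross term coming from the random lower limit (the bound from \cite{Hult2011}), followed by one-dimensional moderate-deviation bounds whose explicit, $m$-dependent exponents are driven to $-\infty$ by \eqref{eq:hyp1}--\eqref{eq:hyp4}; you also correctly identify that the exponents must be kept explicit rather than invoking the LDPs abstractly. The only imprecision is in the integrated-tail piece: $b_n\int_{t_m}^\infty(T_n^w-T)\,dx$ is not controlled by the pointwise LDP for $b_n[T_n^w(t_m)-q_m]$ alone but by the functional LDP of Corollary \ref{cor:tailIS} on $\ell_\infty[t_m,\infty]$ pushed through the (linear) integration functional via Theorem \ref{thm:GaoZhao}, whose exponent then diverges by \eqref{eq:hyp1} and \eqref{eq:hyp3} -- a repair entirely within your framework.
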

Lemmas \ref{lemma:ESqpIS} and \ref{lemma:expApprox} are proved in Section \ref{sec:aux}.

The proof of this sections main result, Theorem \ref{thm:ESIS}, is simply a matter of combining Lemmas \ref{lemma:ESqpIS} and \ref{lemma:expApprox}; the following is basically a concise reiteration of the outline above.
\begin{proof}[Proof of Theorem \ref{thm:ESIS}]
	By Lemma \ref{lemma:expApprox} $\{ b_n( \gamma_{q_m,p} ((T_{n} ^w)^{-1}) - \gamma_{q_m,p} (T^{-1})) \}$ is an exponentially good approximation of  $\{ b_n ( \gamma_{p} ((T_{n} ^w)^{-1}) - \gamma_{p} (T^{-1})) \}$. By the large deviation principle of Lemma \ref{lemma:ESqpIS} and Theorem 4.2.16 in \cite{Dembo98} it follows that the sequence $\{ b_n ( \gamma_{p} ((T_{n} ^w)^{-1}) - \gamma_{p} (T^{-1})) \}$ also satisfies the LDP in $\R$ with speed $\lambda_n ^{-2}$. The associated rate function is
\begin{align*}
	I^w _{p} (z) = \sup _{\delta > 0} \liminf _{m\rightarrow \infty} \inf _{y \in B_{z,\delta}} I^w _{q_m,p}(y), 
\end{align*}
where $B_{z,\delta} = \{ x \in \R : |x-z| < \delta \}$. 
The rate function $I^w _{q_m,p}$ associated with the truncated sequence is given by \eqref{eq:rateESqp} and it follows that
\begin{align*}
	I^w _{p} (z) &= \sup _{\delta > 0} \liminf _{m\rightarrow \infty} \inf _{y \in B_{z,\delta}} \frac{y^2}{2 \sigma ^2 _{q_m,p}(w)} \\
	& = \frac{1}{2} \frac{\bigl (\sup _{\delta >0} \inf _{y \in B_{z,\delta}} y^2 \bigr)}{\limsup _{m \rightarrow \infty} \sigma ^2 _{q_m,p}(w)} \\
	&= \frac{z^2}{2} (\limsup _{m \rightarrow \infty} \sigma ^2 _{q_m,p}(w) )^{-1}.
\end{align*}
It is easily checked that under the assumptions \eqref{eq:hyp1}-\eqref{eq:hyp4},
\begin{align*}
	\limsup _{m \rightarrow \infty} \sigma ^2 _{q_m,p}(w) = \sigma _p ^2 (w),
\end{align*}
and the rate function is indeed
\begin{align*}
	I^w _p (z) = \frac{z^2}{2 \sigma _p ^2 (w)}.
\end{align*}

%
%
%
\end{proof}
\section{Proof of auxiliary results}
\label{sec:aux}
In this section the proofs of Lemmas \ref{lemma:ESqpIS} and \ref{lemma:expApprox} are carried out. Lemma \ref{lemma:ESqpIS} follows from results for the quantile process by an application of Theorem \ref{thm:GaoZhao}. The proof of Lemma \ref{lemma:expApprox} relies on exponential bounds originating from LDP's for some one-dimensional versions of the processes considered in Section \ref{sec:MDPrisk}.
\begin{proof}[Proof of Lemma \ref{lemma:ESqpIS}]
The mapping $\gamma _{q,p} $ is linear and thus Hadamard differentiable at $T^{-1}$ with the derivative being the mapping itself evaluated at $T^-1$, $\gamma _p(T^{-1})$. By the chain rule, the mapping $T_{n} ^w \mapsto \gamma _{q,p} ((T_n ^w )^{-1})$ is also Hadamard differentiable. Hadamard differentiability together with Corollary \ref{cor:tailIS} and Theorem \ref{thm:GaoZhao} yields the LDP for $\{ b_n ( \gamma_{p,q} ((T_{n} ^w)^{-1}) - \gamma_{p,q} (T^{-1}))\}$ with speed $\lambda _n^{-2}$. 
The associated rate function is
\begin{equation}
		I^w _{q,p}(z) = \inf \Big \{ I_{\nu}(\eta) \colon \eta \in \M _b ^{\nu, 0}, \ \frac{1}{p}\int _q ^p \frac{\eta(  \cdot >T^{-1}(u) \}w)}{f(T^{-1}(u))}du = z \Big \}, \ z \in \R.
\end{equation}
To obtain an explicit expression for $I ^w _{q,p}$ the same type of convex optimization arguments as in the proof of Lemma \ref{lemma:expApprox} can be used.
First, note that for a measure $\eta$ such that $h = d\eta / d\nu \in L_2(\R,\nu)$, 
\begin{align*}
	\int _q ^p \frac{\eta(  \cdot >T^{-1}(u) \}w)}{f(T^{-1}(u))}du & = \int _q ^p \Big ( \int _{T^{-1}(u)} ^{\infty} \frac{w(x) h(x)}{f(T^{-1}(u))} \nu(dx)\Big) du \\
	&= \int _{T^{-1}(p)} ^{\infty} \Big ( \int _{T(x) \vee q} ^{p} \frac{w(x) h(x)}{f(T^{-1})(u)} du \Big ) \nu(dx).
\end{align*}
Furthermore, due to the assumption that the density $f$ is continuous and strictly positive on the interval $[q,p]$, 
\begin{align*}
	& \int _{T^{-1}(p)} ^{\infty} \Big ( \int _{T(x) \vee q} ^{p} \frac{w(x) h(x)}{f(T^{-1})(u)} du \Big ) \nu(dx) \\
	& = \int _{T^{-1}(p)} ^{\infty} w(x) h(x) \Big ( \int _{T(x) \vee q} ^{p} \bigl( \frac{d}{du}T^{-1}(u) \bigr) du \Big ) \nu(dx) \\
	& = ( T^{-1}(q) - T^{-1}(p)) \int _{T^{-1}(q)} ^{\infty} w(x) h(x) \nu(dx) \\ 
	& \quad + \int _{T^{-1}(p)} ^{T^{-1}(q)} (x - T^{-1}(p)) w(x) h(x) \nu(dx).
\end{align*}
This expression enters the optimization problem as a linear constraint. For brevity, the details of the optimization procedure are omitted and we emphasize that once the constraint has be re-written as above, there are no additional difficulties compared to the corresponding parts of the proof of Lemma \ref{lemma:expApprox}. Performing the optimization gives
\begin{align*}
	I^w _{q,p} (z) = \frac{z^2}{ 2 \sigma ^2 _{q , p}(w)}.
\end{align*}
\end{proof}
\begin{proof}[Proof of Lemma \ref{lemma:expApprox}]
For any $\delta >0$ and a fixed $0<q_m<p$, consider
\begin{align*}
	& \Prob \Big (b_n \bigl | \gamma _p ((T_{n} ^w)^{-1}) - \gamma _p (T^{-1}) - ( \gamma _{q_m , p}((T_{n} ^w)^{-1}) - \gamma _{q_m, p} (T^{-1})) \bigr | \geq \delta \Big ) \\
	&= \Prob \Big (b_n\frac{1}{p} \bigl | \int _{0} ^{q_m} \bigl( (T_{n} ^w)^{-1}(u) - T^{-1}(u) \bigr)du \bigr | \geq \delta \Big ).
\end{align*}
Following the proof of Propositon 3.4 in \cite{Hult2011}, an upper bound for this probability is given by 
\begin{align}
		& \Prob \Big (b_n\frac{1}{p} \bigl | \int _{0} ^{q_m} ((T_{n} ^w)^{-1}(u) - T^{-1}(u))du \bigr | \geq \delta \Big )	\notag \\
		& \leq 2 \Prob \Big (b_n\frac{q_m}{p} \bigl | (T_{n} ^w)^{-1}(q_m) - T^{-1}(q_m) \bigr | \geq \frac{\delta}{4} \Big ) \label{eq:es_mid} \\
		& \quad + \Prob \Big (b_n\frac{1}{p} \bigl | \int _{T^{-1}(q_m)} ^{\infty} (T_{n} ^w (x) - T(x))dx \bigr | \geq \frac{\delta}{4} \Big ) \label{eq:es_first} \\
		& \qquad + 	\Prob \Big ( b_n\frac{1}{p}T_{n} ^w (T^{-1}(q_m)) \bigl | (T_{n} ^w)^{-1}(q_m) - T^{-1}(q_m) \bigr | \geq \frac{\delta}{4} \Big ) \label{eq:es_last}.
\end{align}
We claim that each of \eqref{eq:es_mid}-\eqref{eq:es_last} is bounded from above by an exponential term that gives the correct behavior on logarithmic scale. The latter means that each of \eqref{eq:es_mid}-\eqref{eq:es_last} is bounded (individually) by an exponential term, dependent on $m$, such that when taking logarithm and multiplying with $\lambda _n ^{-2}$, the limit as $n \rightarrow \infty$ can be made as negative as desired. Such exponential bounds follow from large deviation results for the corresponding random variables. 

First consider \eqref{eq:es_mid}. A one-dimensional version of the LDP for the quantile process states that  $\{ b_n ((T_{n} ^w)^{-1}(q_m) - T^{-1}(q_m) \}$ satisfies the LDP and it follows that 
$$Ê\Prob \Big ( b_n \frac{q_m}{p} \bigl | (T_{n} ^w)^{-1}(q_m) - T^{-1}(q_m) \bigr | \geq \frac{\delta}{4} \Big ) = \textrm{exp} \Big \{- \lambda _n^2 \kappa _1 (q_m,p \delta /4 ) + o (\lambda _n^2) \Big \},$$
where
\begin{align*}
	\kappa _1 (q_m,\delta) &= \inf \Big \{ I_{\nu} (\eta) \colon \eta \in \M _b ^{\nu, 0}, \frac{q_m | \eta (I\{\cdot > T^{-1}(q_m) \}w)|}{f(T^{-1}(q_m))} \geq \delta \Big \}.
\end{align*}

Next, consider \eqref{eq:es_first}.  By Corollary \ref{cor:tailIS} and Theorem \ref{thm:GaoZhao} the sequence \\
$\{ b_n \int _{T^{-1}(q_m)} ^{\infty} (T_{n} ^w (x) - T(x))dx \}$ satisfies the LDP with speed $\lambda _n ^{-2}$ and rate function, 
\begin{equation*}
	I_2 (z) = \inf \Big \{ I_{\nu}(\eta) \colon \eta \in \M _b ^{\nu, 0} , \int _{F^{-1}(q_m)} ^{\infty} \eta (I\{ \cdot > x\} w) dx = z \Big \}, \ z \in \R.
\end{equation*}
The LDP implies that
\begin{align*}
	& \Prob \Big ( b_n \frac{1}{p} \bigl | \int _{T^{-1}(q_m)} ^{\infty} (T_{n} ^w (x) - T(x))dx \bigr | \geq \frac{\delta}{4} \Big ) \\
	& = \textrm{exp} \Big \{-\lambda _n ^2 \kappa _2 (q_m, p \delta / 4) + o (\lambda _n^2) \Big \},
\end{align*}
where
\begin{align*}
	\kappa _2 (q_m,\delta) &= \inf \Big \{ I_{\nu} (\eta) \colon \eta \in \M _b ^{\nu, 0},  \bigl | \int _{T^{-1}(q_m)} ^{\infty} \eta (I\{ \cdot > x \}w )dx \bigr | \geq \delta \Big \}. 
\end{align*}

Ê
Finally, consider \eqref{eq:es_last}. An upper bound is given by
\begin{align}
	& \Prob \Big (b_n\frac{1}{p}T_{n} ^w(T^{-1}(q_m)) \bigl | (T_{n} ^w)^{-1}(q_m) - T^{-1}(q_m) \bigr | \geq \frac{\delta}{4} \Big )  \nonumber \\
	& \leq 	\Prob \Big ( b_n \frac{1}{p} \bigl | T_{n} ^w (T^{-1}(q_m)) - q_m) \bigr | \times \bigl| (T_{n} ^w)^{-1}(q_m) - T^{-1}(q_m) \bigr | \geq \frac{\delta}{8} \Big ) \label{eq:prodProb} \\
	& \quad +	\Prob \Big ( b_n \frac{q_m}{p} \bigl | (T_{n} ^w)^{-1}(q_m) - T^{-1}(q_m) \bigr | \geq \frac{\delta}{8} \Big ) \label{eq:prodProb2}.  
\end{align}
The probability \eqref{eq:prodProb2} is \eqref{eq:es_mid} with $\delta /4$ replaced by $\delta / 8$ and can be treated precisely in the same way. To derive an upper bound for \eqref{eq:prodProb} define, for  some $\epsilon > 0$, the event $A_m = \{ |T_{n} ^w (T^{-1}(q_m)) - q_m| \geq \epsilon q_m\}$. Then,
\begin{align*}
	&\Prob \Big ( b_n \frac{1}{p} \bigl | T_{n} ^w (T^{-1}(q_m)) - q_m) \bigr | \times \bigl| (T_{n} ^w)^{-1}(q_m) - T^{-1}(q_m) \bigr | \geq \frac{\delta}{8} \Big ) \\
	& = \Prob \Big ( b_n \frac{1}{p} \bigl | T_{n} ^w (T^{-1}(q_m)) - q_m) \bigr | \times \bigl| (T_{n} ^w)^{-1}(q_m) - T^{-1}(q_m) \bigr | \geq \frac{\delta}{8} , A_m  \Big ) \\
	& \quad + \Prob \Big ( b_n \frac{1}{p} \bigl | T_{n} ^w (T^{-1}(q_m)) - q_m) \bigr | \times \bigl| (T_{n} ^w)^{-1}(q_m) - T^{-1}(q_m) \bigr | \geq \frac{\delta}{8} , \ A_m ^c  \Big ).
\end{align*}
The second term on the right-hand side is bounded from above by
\begin{align*}Ê
& \Prob \Big ( b_n \frac{1}{p} \bigl | T_{n} ^w (T^{-1}(q_m)) - q_m) \bigr | \times \bigl| T_{n,\nu}^{-1}(q_m) - T^{-1}(q_m) \bigr | \geq \frac{\delta}{8} , A_m ^c  \Big ) \\
& \leq \Prob \Big ( b_n \frac{q_m}{p} \bigl| T_{n,\nu}^{-1}(q_m) - T^{-1}(q_m) \bigr | \geq \frac{\delta}{8 \epsilon}  \Big )
\end{align*}
Since $\epsilon$ is just a constant this probability is treated in the same way as \eqref{eq:es_mid}. Moreover,
\begin{align*}
	& \Prob \Big ( b_n \frac{1}{p} \bigl | T_{n} ^w (T^{-1}(q_m)) - q_m) \bigr | \times \bigl| (T_{n} ^w)^{-1}(q_m) - T^{-1}(q_m) \bigr | \geq \frac{\delta}{8} , \ A_m  \Big ) \\
	&\leq \Prob \Big (  \frac{1}{q_m} \bigl | T_{n} ^w (T^{-1}(q_m)) - q_m) \bigr | \geq \epsilon \Big ).
\end{align*}
Since $b_n \rightarrow \infty$ and $q_m \rightarrow 0$ as $m, n \rightarrow \infty$, for each (fixed) $m$ there is an $n_m$ such that $ b_n \geq q_m ^{-1}$ for all $n \geq n_m$. Hence, taking $n$ sufficiently large,
\begin{align*}
	\Prob \Big (  \frac{1}{q_m} \bigl | T_{n} ^w (T^{-1}(q_m)) - q_m) \bigr | \geq \epsilon \Big ) \leq \Prob \Big (  b_n \bigl | T_{n} ^w (T^{-1}(q_m)) - q_m) \bigr | \geq \epsilon \Big ).
\end{align*}
The sequence $\{Êb_n \bigl( T_{n} ^w (T^{-1}(q_m)) - q_m \bigr) \}$ satisfies the LDP in $\R$ with speed $\lambda _n ^{-2}$ and rate function
\begin{align*}
	I _3 (z) = \inf \bigl \{ I_{\nu} (\eta) \colon \eta \in \M _b ^{\nu , 0} , \ \eta (I \{ \cdot > T^{-1}(q_m) \} w) = z  \bigr \}, \ z\in \R.
\end{align*}
Therefore,
\begin{align*}
	\Prob \bigl (  b_n \bigl | T_{n} ^w (T^{-1}(q_m)) - q_m) \bigr | \geq \epsilon \bigr ) = \textrm{exp} \bigl \{ -\lambda _n ^{2} \kappa _3 (q_m, \epsilon) + o(\lambda_n ^2 )\bigr \},
\end{align*}
where
\begin{align*}
	\kappa _3 (q_m, \delta) = \inf \bigl \{ I_{\nu} (\eta) \colon \eta \in \M _b ^{\nu , 0} , \ \abs {\eta(I \{ \cdot > T^{-1}(q_m) \} w)} \geq \delta   \bigr\}.
\end{align*}

It remains to show that, for $i=1,2,3$ and any $\delta >0$, $\kappa _i (q_m , \delta) \rightarrow \infty$ as $m \rightarrow \infty$. This is achieved by first solving the variational problems to get explicit expressions for the $\kappa _i$'s and then using assumptions \eqref{eq:hyp1}-\eqref{eq:hyp4}. 
For $\kappa _1$, notice that $(q_m / f(T^{-1}(q_m))) \abs{\gamma (I\{\cdot > T^{-1}(q_m) \} w)} \geq \delta$ implies that the left-hand side is either positive and $\geq \delta$ or 
negative and $\leq - \delta$; similarly for $\kappa_2$ and $\kappa _3$. Therefore, each of the variational problems involve minimization of a convex functional with convex constraints. Hence, standard arguments from convex optimization are available. For more background on convex optimization and Lagrange multipliers, see e.g.\ \cite{Luenberger}. We outline the method for $\kappa _1$, the two remaining cases are handled completely analogous and the details are therefore omitted.

First, notice that the only measures $\eta \in \M _b$ of interest are those that satisfy $\eta \ll \nu$. By the Radon-Nikodym theorem there exists for each such $\eta$ a non-negative function $h$ such that $\eta(dx) = h(x) \nu(dx)$. Hence, the optimization can be taken over functions in $L_2 (\R, \nu)$ such that $h= 0$ on $(\textrm{supp}(\nu))^c$. That the functions lie in $L_2 (\R, \nu)$ corresponds to a finite rate in the large deviation analysis. For now, let $h$ denote the Radon-Nikodym derivative of some arbitrary $\eta \in \M _b ^{\nu}$ with respect to the sampling distribution $\nu$. Start by taking the second (inequality) constraint to be $(q_m / f(T^{-1}(q_m))) \int _{ T^{-1}(q_m)} ^{\infty} w(x) h(x) \, \nu(dx) + \delta \leq 0$. Then, in the language of convex optimization, the problem of interest is
\begin{equation*}
\begin{aligned}
& \underset{h}{\text{minimize}}
& &  \frac{1}{2}\int h(x) ^2 \nu(dx) , \\
& \text{subject to}
& & \int h(x) \nu(dx) = 0 , \\
&&& \frac{q_m}{f (T^{-1}(q_m))}\int _{T^{-1}(q_m)} ^{\infty} w(x) h(x) \nu(dx) + \delta \leq 0.
\end{aligned}
\end{equation*}
Define the Lagrangian $L$ by
 \begin{align*}
 	L (h) &= \frac{1}{2}\int h(x)^2  \nu(dx) + \lambda _1 \Big ( \int h(x)\nu(dx) - 0 \Big ) \\ 
	& \quad + \lambda _2 \Big ( \int _{T^{-1}(q_m)} ^{\infty} \frac{q_m w(x) h(x)}{f (T^{-1}(q_m))} \nu(dx) + \delta \Big ),
 \end{align*} 
 for constants $\lambda_1, \lambda_2$. In order to solve the minimization problem, we note that 
 \begin{align*}
  	\lim _{\epsilon \rightarrow 0} \frac{L (h + \epsilon g) - L(h)}{\epsilon} &= \int g(x)h(x) \nu(dx) + \lambda _1 \int g(x) \nu(dx) \\
	& \quad + \lambda_2 \int _{T^{-1}(q_m)} ^{\infty} \frac{q_m g(x) w(x)}{f (T^{-1}(q_m))} \nu(dx) \\
 &= \int _{- \infty} ^{T^{-1}(q_m)} g(x) (h(x) + \lambda _1) \nu(dx) \\ 
 & \quad + \int _{T^{-1}(q_m)} ^{\infty} g(x) \Big (h(x) + \lambda_1 + \lambda_2 \frac{w(x)}{f (F^{-1}(q_m))} \Big ) \nu(dx).
 \end{align*}
For this to be equal to $0$ for all choices of $g \in L_2 (\R , \nu)$ it must hold that
\begin{equation*}
	h(x) = \begin{cases}
	-\lambda _1, & \mbox{on $ (-\infty, T^{-1}(q_m)) \cap \textrm{supp}(\nu),$} \\
	-\lambda _1 - \lambda_2 \frac{q_m w(x)}{f (T^{-1}(q_m))}, & \mbox{on $(T^{-1}(q_m), \infty) \cap \textrm{supp}(\nu).$} 
	\end{cases}	
\end{equation*}
The first constraint, fulfilled with equality, gives
\begin{align*}
	0 &= \int h(x)\nu(dx) \\
	&= - \lambda _1 \int _{- \infty} ^{T^{-1}(q_m)} \nu(dx) + \int _{T^{-1}(q_m)} ^{\infty} \Big ( - \lambda_1 - \lambda _2 \frac{w(x)}{f (T^{-1}(q_m))} \Big ) \nu(dx) \\
	& = -\lambda _1 - \lambda _2 \frac{q_m}{f (T^{-1}(q_m))}.
\end{align*}
That is,
\begin{equation}
\label{eq:lambda1}
	\lambda_1 = - \lambda _2 \frac{q_m}{f (T^{-1}(q_m))}.
\end{equation}
Similarly, the second constraint yields
\begin{align*}
	-\delta & = \int _{T^{-1}(q_m)} ^{\infty} \frac{q_m w(x) h(x)}{f (T^{-1}(q_m))} \nu(dx) \\
	& = \frac{q_m}{f (T^{-1}(q_m))} \int _{T^{-1}(q_m)} ^{\infty} \Big(- \lambda _1 - \lambda _2 \frac{q_m w(x)}{f (T^{-1}(q_m))} \Big ) w(x) \nu(dx) .
\end{align*}
After some algebra, using \eqref{eq:lambda1},
\begin{align}
\label{eq:lambda2}
	\lambda _2 &=  \frac{ \delta q_m f (T^{-1}(q_m)) ^2 }{\E _{\nu}[w(X)^2 I \{ X > T^{-1}(q_m) \}] - q_m ^2}.
\end{align}
Inserting \eqref{eq:lambda1} and \eqref{eq:lambda2} into the expression for $h$ and evaluating $L (h)$ gives the optimal value. One then proceeds in the same way when the second constraint is taken to be $\delta - (q_m / f(T^{-1}(q_m))) \int _{T^{-1}(q_m)} ^{\infty} w(x) h(x) \nu(dx) \leq 0$. It turns out that this minimization problem has the same optimal value, namely $\kappa _1$. Some algebra yields that
$$Ê\kappa _1 (q_m, \delta) = \frac{\delta ^2}{2} \frac{ f(T^{-1}(q_m))^2}{q_m ^2 (\E _{\nu} [w(X)^2 I\{ X > T^{-1}(q_m) \}] - q_m ^2)}.$$

Following the same procedure for $\kappa_2$ and $\kappa _3$,
\begin{align*}
	\kappa_2 (q_m, \delta) &= \frac{\delta ^2}{2} \Big  (T^{-1}(q_m))^2 E_{\nu}[w(X)^2I\{ X > T^{-1}(q_m) \}] \\ 
	& \quad -2 T^{-1}(q_m)E_{\nu}[X w(X)^2 I\{ X > T^{-1}(q_m) \}] \\
	& \qquad + E_ {\nu}[X^2 w(X)^2 I\{ X > T^{-1}(q_m) \}] \\
	& \quad \qquad - (E_{\mu} [X I\{ X > T^{-1}(q_m) \}] - q_mT^{-1}(q_m))^2 \Big ) ^{-1}.
\end{align*}
and
$$Ê\kappa_3 (q_m , \delta) = \frac{\delta ^2}{2} \frac{1}{\E _{\nu} [w(X) ^2 I \{ÊX > T^{-1}(q_m)\}] - q_m ^2}.$$

Finally, we must verify that assumptions \eqref{eq:hyp1}-\eqref{eq:hyp4} are indeed sufficient for $\kappa_i (q_m,\delta) \rightarrow  \infty$, as $m \rightarrow \infty$, $i=1,2,3$. Since
\begin{align*}
& \Big ( \frac{f(T^{-1}(q_m))^2}{q_m ^2 (\E _{\nu} [w(X)^2 I\{ X > T^{-1}(q_m)\}] - q_m ^2)}\Big )^{-1}  \\
&= \frac{q_m ^2 \E _{\nu} [w(X)^2 I\{ X > T^{-1}(q_m)\}]}{f(T^{-1}(q_m))^2} - \frac{q_m ^4}{f(T^{-1}(q_m))^2}, 
\end{align*}
converges to $0$ by \eqref{eq:hyp2} and \eqref{eq:hyp4}, the inverse goes to $\infty$ as $m \rightarrow \infty$. This takes care of $\kappa _1$. Moreover, every term in the denominator of $\kappa _2$ is either equal to or bounded from above by one of $\E _{\mu} [X I\{ X > T^{-1}(q_m)\}]$ and $\E _{\nu} [X^2 w(X)^2 I \{ X > T^{-1}(q_m)\}]$. The assumption \eqref{eq:hyp1} of finite (first and) second moment of $\mu$ and the assumption \eqref{eq:hyp3} on the weighted second moment under $\nu$ together imply that both terms converge to $0$ as $m\rightarrow \infty$. Hence, the denominator converges to 0 and $\kappa _2(q_m,\delta) \rightarrow \infty$ as $m \rightarrow \infty$.
Also, $\kappa _3$ trivially goes to $\infty$ as $m \rightarrow \infty$.

It follows that for any $K, \delta >0$ it is possible to pick an integer $m_K < \infty $ such that
$$Ê\limsup _{n \rightarrow \infty} \frac{1}{\lambda _n ^2} \log \Prob \Bigl ( b_n \frac{1}{p} \Big | \int _{0} ^{q_{m}} ((T_n ^w)^{-1}(u) - T^{-1}(u))du \Big | \geq \delta \Bigr ) \leq - K,$$
for all $m \geq m_K$. This completes the proof.
\end{proof}
\section*{Acknowledgment}
It is a pleasure to thank my advisor Henrik Hult for valuable discussions and his insightful comments and suggestions, as well as for his constant encouragement, throughout this work. Moreover, his feedback on preliminary drafts have greatly helped improve this manuscript.

\appendix

\bibliographystyle{plain}
\bibliography{references}

\end{document}